\numberwithin{equation}{section}
\newtheorem{theorem}{Theorem}[section]
\newtheorem{thm}{Theorem}[section]
\newtheorem{lemma}[theorem]{Lemma}
\newtheorem{dfn}[theorem]{Definition}
\newtheorem{alg}[theorem]{Algorithm}
\newtheorem{assumption}[theorem]{Assumption}
\newcommand{\goto}{\rightarrow}
\newcommand{\f}{\frac}
\DeclareMathOperator{\argmin}{argmin}
\newcommand{\nr}[1]{\ensuremath{\left\|{#1}\right\|}}
\newcommand{\nrs}[1]{\ensuremath{\|{#1}\|}} 
\newcommand{\xa}{x^\alpha}
\newcommand{\wa}{w^\alpha}
\begin{document}

\title{Acceleration of nonlinear solvers for natural convection problems}

\author{
Sara Pollock
\thanks{Department of Mathematics,  University of Florida, Gainesville, FL 32611 (s.pollock@ufl.edu).}
\and
Leo G. Rebholz
\thanks{School of Mathematical and Statistical Sciences, Clemson University, Clemson SC 29634 (rebholz@clemson.edu).}
\and
Mengying Xiao 
\thanks{Department of Mathematics,  University of Florida, Gainesville, FL 32611 (m.xiao@ufl.edu).}
}
\date{}

\maketitle

\begin{abstract}
This paper develops an efficient and robust solution technique 
for the steady Boussinesq model of non-isothermal flow using  
Anderson acceleration applied to a Picard iteration.
After analyzing the fixed point operator associated with the nonlinear iteration to 
prove that certain stability and regularity properties hold, we apply the authors' 
recently constructed theory for Anderson acceleration, which yields a convergence 
result for the Anderson accelerated Picard iteration for the Boussinesq system.  
The result shows that the leading term in the residual is  improved by the gain in the 
optimization problem, but at the cost of additional higher order terms that can be 
significant when the residual is large.  
We perform numerical tests that illustrate the theory, and show that a 2-stage choice 
of Anderson depth can be advantageous.  
We also consider Anderson acceleration applied to the Newton iteration for the 
Boussinesq equations, and observe that the acceleration allows the Newton iteration to 
converge for significantly higher Rayleigh numbers that it could without acceleration, 
even with a standard line search.
\end{abstract}

\section{Introduction}
Flows driven by natural convection (bouyancy) occur in many practical problems 
including ventilation, solar collectors, insulation in windows, cooling in electronics,
and many others \cite{CK11}.  
Such phenomena are typically modeled by
the Boussinesq system, which is given in a domain 
$\Omega\subset\mathbb{R}^d$ ($d$=2 or 3) by
\begin{align}\label{eqn:bouss}
u_t + (u\cdot \nabla ) u - \nu \Delta u + \nabla p 
&= Ri \langle 0, \theta \rangle^T + f,
\nonumber \\
\nabla \cdot u &= 0,
\nonumber \\
\theta_t + (u\cdot \nabla) \theta - \kappa \Delta \theta &= \gamma,
\end{align}
with $u$ representing the velocity field, $p$ the pressure, $\theta$ the temperature 
(or density), and with $f$ and $\gamma$ the external momentum forcing and thermal 
sources. The kinematic viscosity $\nu>0$ is defined as the inverse of the Reynolds 
number ($Re=\nu^{-1}$), and the thermal conductivity
$\kappa$ is given by
$\kappa=Re^{-1}Pr^{-1}$ where $Pr$ is the
Prandtl number and $Ri$ is the Richardson number accounting for the gravitational 
force.  Appropriate initial and boundary conditions are required to determine the 
system.  The Rayleigh number is defined by $Ra=Ri\cdot Re^2 \cdot Pr$, and higher $Ra$ 
leads to more complex physics as well as more difficulties in numerically solving the system.

Finding accurate solutions to the Boussinesq system requires the efficient solution
of a discretized nonlinear system based on \eqref{eqn:bouss}.  
We restrict our attention to those nonlinear systems arising 
from the steady Boussinesq system, since for nonlinear solvers, 
this is the more difficult case.  In time dependent problems, for instance, 
linearizations may be used which allow one to avoid solving nonlinear systems \cite{AKR17}.
If one does need to solve the nonlinear system at each time step (e.g. in cases where 
there are fast temporal dynamics), then our work below is relevant, and the 
analysis as it is developed here generally applies
as the time derivative term only improves the properties of the system. 
Moreover, at each time step one has access to good initial iterates to each subsequent
nonlinear problem (e.g., the solution at the last time step).

We will consider the Picard iteration for solving the steady Boussinesq system:  
Given initial $u_0$, define $u_k, \theta_k$, $k \ge 1$, by  
\begin{align}
(u_{k-1}\cdot \nabla ) u_k - \nu \Delta u_k + \nabla p_k & = Ri \langle 0,\theta_{k} \rangle^T + f, \label{it1a} \\
\nabla \cdot u_k &= 0, \label{it2a} \\
(u_{k-1}\cdot \nabla) \theta_k - \kappa \Delta \theta_k & = \gamma, \label{it3a}
\end{align}
with $u_k, \theta_k$ satisfying appropriate boundary conditions.  
Our analysis will consider this iteration together with a finite element 
discretization.  A critical feature of this iteration is that the momentum/mass 
equations are decoupled in each iteration from the energy equation.
At each Picard iteration, one first solves the linear system \eqref{it3a}, and then
solves the linear system \eqref{it1a}-\eqref{it2a}, which 
will be much more efficient than the 
Newton iteration (add $(u_k-u_{k-1})\cdot\nabla u_{k-1}$ and $(u_k-u_{k-1})\cdot\nabla \theta_{k-1}$ to the momentum and energy equations, respectively).  
The difficulty with the Newton iteration is that it is fully coupled: at each
iteration the linear solve is for $(u_k,p_k,\theta_k)$ together.
Such block linear systems can be
difficult to solve since little is known about how 
to effectively precondition them.  

The decoupled iteration \eqref{it1a}-\eqref{it3a}, on the other hand, requires solving an Oseen linear system and a temperature transport system; many methods exist
for effectively solving these linear systems \cite{bgl05,benzi,elman:silvester:wathen}.
However, even if each step of the decoupled iteration is fast, convergence properties 
for this iteration are not as good as those for the Newton iteration
(provided a good initial guess).
This motivates accelerating the nonlinear iteration for the decoupled scheme to achieve
a method where each linear solve is fast, and which produces a sequence of iterates that
converges rapidly to the solution.
The purpose of this paper is to consider the decoupled Boussinesq iteration \eqref{it1a}-\eqref{it3a} together with Anderson acceleration.

Anderson acceleration \cite{anderson65} is an extrapolation technique which, after
computing an update step from the current iterate, forms the next iterate from a
linear combination of $m$ previous iterates and update steps.  The parameter $m$
is referred to as the algorithmic depth of the iteration. 
The linear combination is chosen 
as the one which minimizes the norm of the $m$ most recent update steps.  
The technique has increased in popularity since its efficient implementation and 
use on a variety of applications was described in \cite{WaNi11}. Convergence theory
can be found in \cite{K18,ToKe15}, and its relation to quasi-Newton methods has been
developed in \cite{Eyert96,FaSa09}.
As shown in \cite{EPRX20,PRX19,PR19a}, as well as \cite{ToKe15}, the choice of norm 
used in the inner optimization problem plays an 
important role in the effectiveness of the acceleration. Local improvement in
the convergence rate of the iteration can be shown if the original fixed-point
iteration is contractive in a given norm.  However, as further discussed in 
\cite{PR19a}, the iteration does not have to be contractive for Anderson acceleration
to be effective.

Herein, we extend the theory of improved convergence using Anderson acceleration 
to the Boussinesq system with the iteration \eqref{it1a}-\eqref{it3a}, 
and demonstrate its efficiency on benchmark problems for natural convection.  
Our results show that applied in accordance with the developed theory of 
\cite{PR19a}, the acceleration has a substantial and positive impact on the efficiency 
of the iteration, and even provides convergence when the iteration would otherwise 
fail.  

Additionally, although the focus of the paper is for the Picard iteration 
\eqref{it1a}-\eqref{it3a}, numerical testing of Anderson acceleration for the related
 Newton iteration is also performed.  Anderson acceleration has been (numerically)  
shown on several test problems to enlarge the domain of convergence
for Newton iterations, although it can locally slow convergence, 
reducing the natural quadratic order of convergence to subquadratic, in the vicinity of a solution
\cite{EPRX20,PS20}.
Our results show that Anderson acceleration applied to the Newton iteration for 
the Boussinesq system substantially improves the performance of the solver at higher 
Rayleigh numbers.

The remainder of the paper is organized as follows: In section \ref{sec:notation} 
we provide some background on a stable finite element spatial discretization for the 
steady Boussinesq equations, and on Anderson acceleration applied to a general fixed 
point iteration. 
In section \ref{sec:decoupled} we give a decoupled fixed point iteration for the 
steady Boussinesq system under the aforementioned finite element framework, 
and show that this iteration is Lipschitz Fr\'echet differentiable and satisfies the 
assumptions of \cite{PR19a}. 
Subsection \ref{subsec:accelbouss} states the Anderson accelerated iteration 
specifically
for, and as it is applied to the steady Boussinesq system, and presents the convergence
results for this problem. 
In section \ref{sec:numerics} 
we report on results of a heated cavity problem 
with varying Rayleigh number for the decoupled iteration of the steady Boussinesq 
system, and show that Anderson acceleration can have a notable positive impact on the 
convergence speed, especially for problems featuring a large Rayleigh number. 
Section \ref{sec:newton} shows numerical results for Anderson acceleration applied
to the related Newton iteration.

\section{Notation and Mathematical Preliminaries}\label{sec:notation}

This section will provide notation, mathematical preliminaries and background, 
to allow for a smooth analysis in later sections.  First,
we will give function space and notational details, followed by finite element discretization preliminaries, and finally a brief review
of Anderson acceleration.

The domain $\Omega\subset\mathbb{R}^{d}$ is assumed to be simply connected   
and to either be a convex polytope or have a smooth boundary.
The $L^2(\Omega)$ norm
and inner product will be denoted by $\| \cdot \|$ and $(\cdot,\cdot)$, respectively, and all other norms will be labeled with subscripts. Common boundary conditions for velocity and temperature are the Dirichlet conditions given by\begin{equation}
\label{dbc}
u =g(x) \text{ on } \partial\Omega, \quad T = h(x) \text{ on }\partial\Omega,
\end{equation}
and mixed Dirichlet/Neumann conditions 
\begin{eqnarray} 
\label{mbc}
u=g(x) \text{ on }\partial\Omega, \quad  T = h(x) \text{ on }\Gamma_1, \quad \nabla 
T\cdot n = 0 \text{ on } \Omega\backslash \Gamma_1,
\end{eqnarray}
where $\Gamma_1\subset \partial\Omega,$ $|\Gamma_1|>0$, and $g(x), h(x)$ are given functions.
For simplicity, we consider the homogeneous case where $g(x)=0$ and $h(x) = 0,$ 
and note that our results are extendable to the non-homogeneous case. 

\subsection{Mathematical preliminaries}\label{subsec:mathprelim}
In this subsection, we consider the system \eqref{it1a}-\eqref{it3a}, coupled with 
the Dirichlet conditions \eqref{dbc}, and present some standard results that will be
used later.  These results additionally hold for system \eqref{it1a}-\eqref{it3a} 
with the mixed boundary conditions \eqref{mbc}, which can by seen by an integration by 
parts. 
The natural function spaces for velocity, pressure, and temperature are given by
\begin{align*}
X &= H_0^1(\Omega)^d := \{v\in L^2(\Omega)^d: \nabla v\in L^2(\Omega)^{d\times d}, v=0 \text{ on }\partial \Omega \}, \\
Q &= L_0^2(\Omega) := \{q\in L^2(\Omega): \int_\Omega q\ dx = 0 \},\\
W &= H_0^1(\Omega).
\end{align*}
The Poincar\'{e} inequality is known to hold in both $X$ and $W$
\cite{laytonbook}: there exists $C_P>0$ dependent only on the domain $\Omega$ 
satisfying $$\| v\| \le C_P\| \nabla v\|,$$ for any $v\in X$, or $v\in W$.

Define the  trilinear form: $b :  X \times X \times X \to \mathbb{R}$ such that for any $u,v,w\in X$
$$b(u,v,w) \coloneq  \frac12 ((u\cdot \nabla v, w) - (u\cdot \nabla w, v)). $$
The operator $b$ is skew-symmetric and satisfies 
\begin{align} 
b(u,v,v) & =  0 ,\label{ss} \\
\label{bbd}
b(u,v,w) &\le  M \| \nabla u\| \|\nabla v\| \|\nabla w\|,
\end{align} 
for any $u,v,w\in X$, with $M$ depending only on $\Omega$ \cite[Chapter 6]{laytonbook}.
 Similarly, 
define $b^*: X\times W \times W \to \mathbb{R}$ such that for any $v\in X, \phi,\psi\in W$ 
$$b^*(v,\phi,\psi) := \frac12( (v\cdot \nabla \phi, \psi) - (v\cdot\nabla \psi, \phi)).$$ One can easily check that $b^*$ also satisfies \eqref{ss}-\eqref{bbd}.

We will denote by $\tau_h$ a regular, conforming triangulation of $\Omega$ with maximum element diameter $h$.  The finite element spaces will be denoted as
$X_h \subset X, Q_h \subset Q, W_h \subset W$, and we require that the 
$(X_h,Q_h)$ pair satisfies the usual discrete inf-sup condition \cite{BS08}. 
Common choices are Taylor-Hood elements \cite{BS08}, 
Scott-Vogelius elements on an appropriate mesh 
\cite{arnold:qin:scott:vogelius:2D,MR2519595,zhang:scott:vogelius:3D}, 
or the mini element \cite{ABF84}. 

Define the discretely divergence-free subspace $V_h$ by
\begin{align}\label{eqn:ddfs}
V_h \coloneqq \{ v \in X_h,\ (\nabla \cdot v_h,q_h)=0 \ \forall q_h\in Q_h \}.
\end{align}
Utilizing the space $V_h$ will help simplify some of the analysis that follows.
The discrete stationary Boussinesq equations for $(u,\theta) \in (V_h, W_h)$ can now
be written in weak form as:
\begin{align}
\label{b1a}
b(u,u,v) + \nu(\nabla u, \nabla v) &= Ri (\langle 0, \theta\rangle^T, v) + (f,v) ,\\
\label{b2a}
b^*(u,\theta, \chi) + \kappa (\nabla \theta, \nabla \chi) & = (\gamma , \chi),
\end{align}
for any $(v,\chi)\in (V_h, W_h)$. One can easily check that 
\begin{equation}
\label{bd1}
\|\nabla u\| \le  K_1 : =RiC_P^2\nu^{-1}\kappa^{-1}\|\gamma\|_{-1}+ \nu^{-1}\|f\|_{-1} 
\text{  and  } \|\nabla \theta\| \le K_2:=\kappa^{-1} \|\gamma \|_{-1},
\end{equation} 
by choosing $v = u, \chi = \theta$ in \eqref{b1a}-\eqref{b2a}. 
We next present a set of sufficient conditions to guarantee the uniqueness of the 
solution for the discrete stationary Boussinesq equations.
\begin{lemma}[Small data condition]
\label{sdc}
The following are sufficient conditions for the  system \eqref{b1a}-\eqref{b2a} 
to have a unique solution
\begin{eqnarray}
\label{sdcineq1}
\nu^{-1}M\left( 2K_1 + \kappa^{-1}MK_2^2 \right) <1, \
\nu^{-1}\kappa^{-1}Ri^2C_P^4 <1.
\end{eqnarray}
\end{lemma}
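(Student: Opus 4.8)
The plan is to argue by contradiction through an energy estimate on the difference of two solutions. Suppose $(u_1,\theta_1)$ and $(u_2,\theta_2)$ both solve \eqref{b1a}-\eqref{b2a} in $(V_h,W_h)$, and set $e_u := u_1 - u_2 \in V_h$ and $e_\theta := \theta_1 - \theta_2 \in W_h$. Subtracting the two momentum equations and the two energy equations, and writing the nonlinear differences via the telescoping identities $b(u_1,u_1,v)-b(u_2,u_2,v) = b(e_u,u_1,v) + b(u_2,e_u,v)$ and $b^*(u_1,\theta_1,\chi)-b^*(u_2,\theta_2,\chi) = b^*(e_u,\theta_1,\chi) + b^*(u_2,e_\theta,\chi)$, I would test the momentum difference with $v = e_u$ and the energy difference with $\chi = e_\theta$.

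First I would use the skew-symmetry \eqref{ss} to eliminate $b(u_2,e_u,e_u)=0$ and $b^*(u_2,e_\theta,e_\theta)=0$. What remains is $\nu\|\nabla e_u\|^2 = -b(e_u,u_1,e_u) + Ri(\langle 0,e_\theta\rangle^T,e_u)$ in the momentum equation and $\kappa\|\nabla e_\theta\|^2 = -b^*(e_u,\theta_1,e_\theta)$ in the energy equation. The surviving trilinear terms are controlled with \eqref{bbd} together with the a priori bounds \eqref{bd1}, giving $|b(e_u,u_1,e_u)| \le M K_1\|\nabla e_u\|^2$ and $|b^*(e_u,\theta_1,e_\theta)| \le M K_2\|\nabla e_u\|\|\nabla e_\theta\|$. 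Applying Young's inequality to the latter and absorbing a half, the energy estimate becomes $\kappa\|\nabla e_\theta\|^2 \le \tfrac{M^2K_2^2}{2\kappa}\|\nabla e_u\|^2 + \tfrac{\kappa}{2}\|\nabla e_\theta\|^2$.

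The delicate step, and the one that produces the two separate conditions, is the treatment of the buoyancy coupling term $Ri(\langle 0,e_\theta\rangle^T,e_u)$. I would bound it by Cauchy--Schwarz and the Poincar\'e inequality as $Ri\,C_P^2\|\nabla e_\theta\|\|\nabla e_u\|$, and then apply a \emph{weighted} Young's inequality balanced symmetrically against the two diffusive energies, namely $Ri\,C_P^2\|\nabla e_\theta\|\|\nabla e_u\| \le \tfrac{Ri\,C_P^2}{2\sqrt{\nu\kappa}}(\nu\|\nabla e_u\|^2 + \kappa\|\nabla e_\theta\|^2)$. Writing $\rho := Ri\,C_P^2(\nu\kappa)^{-1/2}$, the second condition of \eqref{sdcineq1} is exactly $\rho<1$. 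Adding the momentum and energy estimates and moving the $\|\nabla e_\theta\|^2$ contributions to the left yields $\nu(1-\tfrac{\rho}{2})\|\nabla e_u\|^2 + \kappa(\tfrac12 - \tfrac{\rho}{2})\|\nabla e_\theta\|^2 \le (M K_1 + \tfrac{M^2 K_2^2}{2\kappa})\|\nabla e_u\|^2$.

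To finish, I would invoke $\rho<1$ twice: it makes the temperature coefficient $\tfrac{\kappa}{2}(1-\rho)$ nonnegative so that term may be discarded, and it gives $1-\tfrac{\rho}{2} > \tfrac12$, so the left side is at least $\tfrac{\nu}{2}\|\nabla e_u\|^2$. The resulting inequality $\tfrac{\nu}{2}\|\nabla e_u\|^2 \le (M K_1 + \tfrac{M^2K_2^2}{2\kappa})\|\nabla e_u\|^2$, multiplied by $2$, reads $\nu\|\nabla e_u\|^2 \le M(2K_1 + \kappa^{-1}MK_2^2)\|\nabla e_u\|^2$, which contradicts the first condition of \eqref{sdcineq1} unless $\|\nabla e_u\| = 0$; the Poincar\'e inequality then forces $e_u = 0$, and feeding this back into the energy estimate gives $e_\theta = 0$. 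I expect the main obstacle to be identifying the correct weighting in the buoyancy estimate: balancing $\|\nabla e_\theta\|\|\nabla e_u\|$ symmetrically against $\nu\|\nabla e_u\|^2 + \kappa\|\nabla e_\theta\|^2$, rather than absorbing it entirely into one energy, is precisely what decouples the coupling condition from the viscous/advective condition and accounts for both the factor $2K_1$ and the single power of $\kappa^{-1}$ appearing in the first inequality of \eqref{sdcineq1}.
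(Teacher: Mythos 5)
Your proposal is correct and takes essentially the same route as the paper's proof: subtract the two solutions, test the difference equations with $e_u$ and $e_\theta$, kill the first nonlinear terms by skew-symmetry \eqref{ss}, bound the remaining ones with \eqref{bbd} and the a priori bounds \eqref{bd1}, control the buoyancy coupling via Poincar\'e and Young, and conclude uniqueness from the two smallness conditions. The only differences are bookkeeping: you weight Young's inequality symmetrically against $\nu\|\nabla e_u\|^2+\kappa\|\nabla e_\theta\|^2$ and finish by contradiction after discarding the temperature term, whereas the paper absorbs the coupling into the viscous term and concludes from the sign of both coefficients in the combined inequality \eqref{eqn:sdcond}; the resulting conditions are identical.
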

The following stronger condition will also be used in the sequel in order to 
simplify some of the constants.
Define $\eta := \min\{ \nu, \kappa\}$,
then by the definition of $K_1,K_2$ in \eqref{bd1}, 
we have the following inequality 
\begin{align*}
\nu^{-1}M\left( 2K_1 + \kappa^{-1}MK_2^2 \right) & \le \eta^{-2}M \left( 2(RiC_P^2\kappa^{-1}\|\gamma\|_{-1} + \|f\|_{-1}) + \kappa^{-2}M\|\gamma\|_{-1}^2 \right) ,\\
\nu^{-1}\kappa^{-1}Ri^2C_P^4& \le \eta^{-2}Ri^2C_P^4.
\end{align*}
Then a stronger sufficient condition for uniqueness of solutions to  
\eqref{b1a}-\eqref{b2a} is given by 
\begin{eqnarray}
\eta^{-2}M \left( 2(RiC_P^2\kappa^{-1}\|\gamma\|_{-1} + \|f\|_{-1}) + \kappa^{-2}M\|\gamma\|_{-1}^2 \right)<1,  \quad \eta^{-1}RiC_P^2<1,
\label{sdcineq}
\end{eqnarray}
which implies  $\kappa^{-1}RiC_P^2<1$. 
\begin{proof}
Assume $(u,\theta), (w,z)\in (V_h, W_h)$ are solutions to the \eqref{b1a}-\eqref{b2a}.  Subtracting these two systems produces
\begin{align*}
b(u,u-w,v) + b(u-w,w,v) + \nu(\nabla (u-w), \nabla v)  & = Ri(\langle 0, \theta-z\rangle^T, v) ,\\
b^*(u,\theta- z, \chi) + b^*(u-w, z,\chi) + \kappa (\nabla (\theta- z), \nabla \chi) & = 0.
\end{align*}
Setting $v = u-w, \chi = \theta - z$ eliminates the first nonlinear terms in both equations and yields
\begin{align*}
\nu\|\nabla (u-w)\|^2 &\le  \nu^{-1}Ri^2 C_P^4 \|\nabla (\theta-z)\|^2 + 2MK_1\|\nabla (u-w)\|^2,\\ 
\kappa \|\nabla (\theta-z)\|^2 & \le  \kappa^{-1}M^2 K_2^2 \|\nabla (u-w)\|^2,
\end{align*}
thanks to the Cauchy-Schwarz and  Poincar\'e inequalities, together with
\eqref{bbd} and \eqref{bd1}.
Combining these bounds, we obtain
\begin{eqnarray}\label{eqn:sdcond}
\left( 1-2\nu^{-1}MK_1 - \nu^{-1}\kappa^{-1}M^2K_2^2 \right) \nu \|\nabla (u-w)\|^2 + \left( 1-\nu^{-1}\kappa^{-1}Ri^2 C_P^4 \right)\kappa\|\nabla (\theta- z)\|^2 \le 0.
\end{eqnarray}
Under the conditions \eqref{sdcineq1},
both terms on the left-hand side of \eqref{eqn:sdcond} are nonnegative, 
and in fact positive unless $u = w$ and $\theta=z$, implying the solution is 
unique.
\end{proof}

In the remainder, we will assume either
\eqref{sdcineq1} or \eqref{sdcineq} holds, to guarantee the well-posedness of the
system \eqref{b1a}-\eqref{b2a}. 
For notational simplicity, we prefer using the stronger \eqref{sdcineq}. 
All of the results in section \ref{sec:decoupled} hold as well for 
\eqref{sdcineq1} with minor differences in the constants.

\subsection{Anderson acceleration}
The extrapolation technique known as Anderson acceleration, which is used to improve 
the convergence of a fixed-point iteration, 
may be stated as follows \cite{ToKe15,WaNi11}.
Consider a fixed-point operator $g: Y \goto Y$ where $Y$ is a normed vector space. 

\begin{alg}[Anderson iteration] \label{alg:anderson}
Anderson acceleration with depth $m \ge 0$ and damping factors $0 < \beta_k \le 1$.\\ 
Step 0: Choose $x_0\in Y.$\\
Step 1: Find $w_1\in Y $ such that $w_1 = g(x_0) - x_0$.  
Set $x_1 = x_0 + w_1$. \\
Step $k$: For $k=2,3,\ldots$ Set $m_k = \min\{ k-1, m\}.$\\
\indent [a.] Find $w_{k} = g(x_{k-1})-x_{k-1}$. \\
\indent [b.] Solve the minimization problem for $\{ \alpha_{j}^{k}\}_{k-m_k}^{k-1}$
\begin{align}\label{eqn:opt-v0}
\textstyle \min 
	\left\| \left(1- \sum\limits_{j=k-m_k}^{k-1} \alpha_j^{k} \right) w_k + \sum\limits_{j = k-m_k}^{k-1} w_{j} \right\|_Y.
\end{align}
\indent [c.] For damping factor $0 < \beta_k \le 1$, set
\begin{align}\label{eqn:update-v0}
\textstyle
x_{k} 
= (1-\sum\limits_{j = k-m_k}^{k-1}\alpha_j^k) x_{k-1} + \sum_{j= k-m_k}^{k-1} \alpha_j^{k} x_{j-1}
+ \beta_k \left(  (1- \sum\limits_{j= k-m_k}^{k-1} \alpha_j^{k}) w_k + \sum\limits_{j=k-m_k}^{k-1}\alpha_j^k w_{j}\right),
\end{align}
where $w_{j} = g(x_{j-1}) - x_{j-1}$
may be referred to as the update step or as the nonlinear residual.
\end{alg}
Depth $m=0$ returns the original fixed-point iteration.
For purposes of implementation with depth $m > 0$, 
it makes sense to write the algorithm in terms
of an unconstrained optimization problem rather than a constrained problem as 
in \eqref{eqn:opt-v0} \cite{FaSa09,PR19a,WaNi11}.
Define matrices $E_k$ and  $F_k$, whose columns are the consecutive differences between
iterates and residuals, respectively.
\begin{align}
\label{eqn:Ekdef}
E_{k-1} & := \left( \begin{array}{cccccc}e_{k-1} & e_{k-2} &  
  \cdots & e_{k-m_k} \end{array} \right), \quad e_j = x_j - x_{j-1},
\\ \label{eqn:Fkdef}
F_k & := \left( \begin{array}{cccccc}(w_{k}-w_{k-1}) & (w_{k-1} - w_{k-2}) &  
  \cdots & (w_{k-m_k+1} - w_{k-m_k}) \end{array} \right).
\end{align}
Then defining 
$\gamma^{k} = \argmin_{\gamma \in \mathbb{R}^m} \left\|w_{k}-F_k \gamma\right\|_Y$,
the update step \eqref{eqn:update-v0} may be written as
\begin{align}\label{eqn:update-v1}
x_{k} = x_{k-1} + \beta_k w_{k} - (E_{k-1}  +\beta_k F_k) \gamma^{k} 
= \xa_{k-1} + \beta_k \wa_{k}, 
\end{align}
where $\wa_{k} = w_{k} - F_k \gamma^{k}$ and $\xa_{k-1} = x_{k-1} - E_{k-1} \gamma^{k}$,
are the averages corresponding to the solution from the optimization problem. 
The optimization gain factor $\xi_{k}$ may be defined by 
\begin{align}\label{eqn:thetadef}
\nr{\wa_{k}}= \xi_{k}\nrs{w_{k}}.
\end{align}
The gain factor $\xi_k$ plays a critical role in the recent theory \cite{EPRX20,PR19a} 
that shows how this acceleration technique improves convergence.  
Specifically, the acceleration reduces the contribution from the first-order residual 
term by a factor of $\xi_k$, but introduces higher-order terms into the residual 
expansion of the accelerated iterate.

The next two assumptions, summarized from \cite{PR19a}, give sufficient conditions
on the fixed point operator $g$, for the analysis presented there to hold.
\begin{assumption}\label{assume:g}
Assume $g\in C^1(Y)$ has a fixed point $x^\ast$ in $Y$,
and there are positive constants $C_0$ and $C_1$ with
\begin{enumerate}
\item $\nr{g'(x)} \le C_0$ for all $x\in Y$, and 
\item $\nr{g'(x) - g'(y)} \le C_1 \nr{x-y}$
for all $x,y \in Y$.
\end{enumerate}
\end{assumption}

\begin{assumption}\label{assume:fg} 
Assume there is a constant $\sigma> 0$ for which the differences between consecutive
residuals and iterates satisfy
$$ \| w_{{k}+1} - w_{k}\|_Y  \ge \sigma \| x_{k} - x_{{k}-1} \|_Y, \quad {k} \ge 1.$$
\end{assumption}

Under Assumptions \ref{assume:g} and \ref{assume:fg}, 
the following result summarized from 
\cite{PR19a} produces a one-step bound on the residual $\nr{w_{k+1}}$ in terms
of the previous residual $\nr{w_k}$.

\begin{theorem} [Pollock, Rebholz, 2019] \label{thm:genm}
Let Assumptions \ref{assume:g} and \ref{assume:fg} hold,  and suppose the direction 
sines between columns of $F_j$ defined by \eqref{eqn:Fkdef} 
are bounded below by a constant $c_s>0$, for $j = k-m_k, \ldots, k-1$.
Then the residual $w_{k+1} = g(x_k)-x_k$ from Algorithm \ref{alg:anderson} 
(depth $m$) satisfies the following bound.
\begin{align}\label{eqn:genm}
\nr{w_{k+1}} & \le \nr{w_k} \Bigg(
 \xi_k ((1-\beta_{k}) + C_0 \beta_{k})
+ \f{C C_1\sqrt{1-\theta_k^2}}{2}\bigg(
 \nr{w_{k}}h(\xi_{k})
\nonumber \\ &
 + 2  \sum_{n = k-{m_{k}}+1}^{k-1} (k-n)\nr{w_n}h(\xi_n) 
 + m_{k}\nr{w_{k-m_{k}}}h(\xi_{k-m_{k}})
 \bigg) \Bigg),
\end{align}
where  each $h(\xi_j) \le C \sqrt{1 - \xi_j^2} + \beta_{j}\xi_j$,
and $C$ depends on $c_s$ and the implied upper bound on the direction cosines.  
\end{theorem}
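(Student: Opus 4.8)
The plan is to expand the next residual $w_{k+1}=g(x_k)-x_k$ about the averaged iterate $\xa_{k-1}$, isolate a single first-order term whose size is governed by the gain factor $\xi_k$, and absorb everything else into a second-order remainder. Starting from the update $x_k=\xa_{k-1}+\beta_k\wa_k$ in \eqref{eqn:update-v1}, a first-order Taylor expansion of $g$ with the Lipschitz bound in Assumption \ref{assume:g}(2) gives
\[
g(x_k)=g(\xa_{k-1})+\beta_k\,g'(\xa_{k-1})\wa_k+\rho_k,\qquad \nr{\rho_k}\le\tfrac{C_1}{2}\beta_k^2\nr{\wa_k}^2 .
\]
The key structural fact is that $\xa_{k-1}$ and $\wa_k$ are built from a single set of affine weights $\alpha_j$ (those of \eqref{eqn:update-v0}, summing to one) applied to the histories $\{x_{j-1}\}$ and $\{w_j\}$; since $g(x_{j-1})=x_{j-1}+w_j$, this means $\sum_j\alpha_j\,g(x_{j-1})=\xa_{k-1}+\wa_k$.

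I would then relate $g(\xa_{k-1})$ to this affine average. Taylor expanding each $g(x_{j-1})$ about $\bar x:=\xa_{k-1}$ and using $\sum_j\alpha_j(x_{j-1}-\bar x)=0$ cancels the first-order contributions, leaving a purely second-order defect:
\[
g(\xa_{k-1})=\xa_{k-1}+\wa_k-r_k,\qquad \nr{r_k}\le\tfrac{C_1}{2}\sum_j|\alpha_j|\,\nr{x_{j-1}-\bar x}^2 .
\]
Substituting into the expansion of $g(x_k)$ and subtracting $x_k=\xa_{k-1}+\beta_k\wa_k$ yields
\[
w_{k+1}=(1-\beta_k)\wa_k+\beta_k\,g'(\xa_{k-1})\wa_k-r_k+\rho_k .
\]
Taking norms, bounding $\nr{g'(\xa_{k-1})}\le C_0$, and using $\nr{\wa_k}=\xi_k\nrs{w_k}$ from \eqref{eqn:thetadef} produces the leading factor $\xi_k\big((1-\beta_k)+C_0\beta_k\big)\nrs{w_k}$ as in \eqref{eqn:genm}. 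It then remains to show that $\nr{r_k}+\nr{\rho_k}$ is dominated by the bracketed second-order sum.

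The bulk of the work, and the main obstacle, is converting the geometric remainder $\sum_j|\alpha_j|\nr{x_{j-1}-\bar x}^2$ into the stated sum of residual norms weighted by $h(\xi_n)$. I would proceed in three moves. First, telescope $x_{j-1}-\bar x$ into consecutive iterate differences $e_n=x_n-x_{n-1}$; the number of differences separating index $j$ from $k$ is what produces the integer weights $(k-n)$ and $m_k$ attached to the three groups of terms. Second, bound each $\nr{e_n}$: writing $e_n=-E_{n-1}\gamma^n+\beta_n\wa_n$ from \eqref{eqn:update-v1}, the explicit step contributes $\beta_n\nr{\wa_n}=\beta_n\xi_n\nrs{w_n}$, while the correction $E_{n-1}\gamma^n$ is tied to the residual correction $F_n\gamma^n=w_n-\wa_n$ through Assumption \ref{assume:fg}; since $\wa_n$ is the least-squares-optimal combination, orthogonality of the optimal residual to the range of $F_n$ gives $\nr{w_n-\wa_n}\le C\sqrt{1-\xi_n^2}\,\nrs{w_n}$, and the two contributions assemble into the envelope $h(\xi_n)\le C\sqrt{1-\xi_n^2}+\beta_n\xi_n$. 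Third, control the optimization weights $\gamma^k$ (hence the $\alpha_j$) using the lower bound $c_s$ on the direction sines between the columns of $F_k$: this bounds the conditioning of the least-squares problem, keeps $\sum_j|\alpha_j|$ finite, and supplies the global factor $\sqrt{1-\theta_k^2}$ and the constant $C$ multiplying the second-order block.

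The delicate points are the index bookkeeping in the telescoping step, which fixes the precise $(k-n)$ and $m_k$ coefficients, and the conditioning estimate for $\gamma^k$, where the hypothesis $c_s>0$ is exactly what prevents the weights from blowing up and is essential for the constants in $h$ and in \eqref{eqn:genm} to be finite. The leading-order extraction and the identity $g(\xa_{k-1})=\xa_{k-1}+\wa_k-r_k$ should be routine; the real effort lies in assembling the second-order remainder into the weighted sum with the correct index-dependent coefficients, for which the argument of \cite{PR19a} provides the template.
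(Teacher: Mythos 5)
Two framing remarks first. This theorem is not proved in the paper at all: it is quoted (``summarized from \cite{PR19a}''), so your proposal can only be judged against the argument of that reference. In outline you do follow its template: exploit the affine structure $\sum_j \alpha_j^k g(x_{j-1}) = \xa_{k-1} + \wa_k$ (weights summing to one) so that first-order contributions cancel against $x_k = \xa_{k-1} + \beta_k \wa_k$, and control what remains using the three ingredients you correctly name --- the orthogonality $\|F_k\gamma^k\| = \sqrt{1-\xi_k^2}\,\|w_k\|$ of the least-squares solution (valid because the optimization norm is induced by an inner product, as the B-norm is), Assumption \ref{assume:fg} to pass from residual differences to iterate differences, and the direction-sine bound $c_s$ to keep $\sum_i |\gamma_i^k|\,\|(F_k)_i\| \le C \|F_k \gamma^k\|$, which is what yields $\|e_n\| \le h(\xi_n)\|w_n\|$ and, after telescoping, the coefficients $(k-n)$ and $m_k$.

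There is, however, a genuine flaw in your first step, and it is not a bookkeeping issue. You expand $g(x_k) = g(\xa_{k-1}) + \beta_k g'(\xa_{k-1})\wa_k + \rho_k$ with $\|\rho_k\| \le \frac{C_1}{2}\beta_k^2\|\wa_k\|^2 = \frac{C_1}{2}\beta_k^2 \xi_k^2 \|w_k\|^2$. This remainder carries no factor of $\sqrt{1-\xi_k^2}$, whereas in \eqref{eqn:genm} the \emph{entire} higher-order block is multiplied by $\sqrt{1-\theta_k^2}$ (here $\theta_k$ is the gain $\xi_k$ in the notation of \cite{PR19a}). Your bound therefore cannot be assembled into \eqref{eqn:genm}: when the optimization gives no gain, $\xi_k = 1$, the hypothesis $c_s>0$ forces $\gamma^k = 0$, and the theorem collapses to $\|w_{k+1}\| \le \|w_k\|\left((1-\beta_k) + C_0\beta_k\right)$ with no second-order terms at all, while your $\rho_k$ still contributes $\frac{C_1}{2}\beta_k^2\|w_k\|^2 > 0$. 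The repair is to \emph{not} freeze $g'$ at $\xa_{k-1}$: write $g(x_k) - g(\xa_{k-1}) = \int_0^1 g'(\xa_{k-1} + t\beta_k\wa_k)\,\beta_k\wa_k \,dt$ and use the global bound $\|g'\| \le C_0$ of Assumption \ref{assume:g}, which gives $\|g(x_k) - g(\xa_{k-1})\| \le C_0\beta_k\xi_k\|w_k\|$ exactly, with no Lipschitz penalty --- this is precisely why Assumption \ref{assume:g} demands a \emph{global} derivative bound rather than one at a point. The Lipschitz constant $C_1$ should enter only through your $r_k = \xa_{k-1} + \wa_k - g(\xa_{k-1}) = \sum_j \alpha_j^k\left(g(x_{j-1}) - g(\xa_{k-1})\right)$, and that term does inherit the factor $\sqrt{1-\xi_k^2}\,\|w_k\|$, because every off-diagonal weight $\alpha_j^k$, $j<k$, is built from $\gamma^k$ and hence controlled by $\|F_k\gamma^k\| = \sqrt{1-\xi_k^2}\,\|w_k\|$ through the $c_s$-conditioning and Assumption \ref{assume:fg}; in particular $r_k$ also vanishes when $\xi_k = 1$, as the theorem requires. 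With that correction, the remainder of your outline (telescoping, the integer weights, the envelope $h$) is the bookkeeping carried out in \cite{PR19a}.
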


The one-step estimate \eqref{eqn:genm} shows how the relative contributions from 
the lower and higher order terms are determined by the gain $\xi_k$ 
from the optimization problem.
The lower order terms are scaled by $\xi_k$ and the higher-order terms are scaled
by $\sqrt{1 - \xi_k^2}$.  
Greater algorithmic depths $m$ generally give smaller values of $\xi_k$ as the 
optimization is run over an expanded search space. 
However, the reduction comes at the cost of both increased accumulation and weight 
of higher order terms.
If recent residuals are small, then this may be negligible and greater algorithmic
depths $m$ may be advantageous (to a point). 
If the previous residual terms are large however (e.g., near the beginning of 
an iteration), then greater depths $m$ may slow or prevent convergence 
in many cases.

\section{A decoupled fixed point iteration for the Boussinesq system}
\label{sec:decoupled}
All results in this section hold for both Dirichlet boundary conditions \eqref{dbc}, 
and mixed boundary conditions \eqref{mbc}, with minor differences in the constants. 
We will use the Dirichlet boundary conditions for illustration. 
One can easily extend the analysis for the mixed boundary conditions.

We will consider the following fixed point iteration:  
Given $u_0,\theta_0$, for $k=1,2,3,...$, 
find $(u_k,p_k,\theta_k)\in (X_h,Q_h,W_h)$ satisfying
for all $v\in X_h, q\in Q_h, \chi\in W_h$, 
\begin{align}
b(u_{k-1}, u_k , v) + \nu (\nabla u_k, \nabla v) - (p_k, \nabla \cdot v)  
&= Ri\left(  \langle 0, \theta_{k}\rangle^T , v\right) + (f,v),\\
(\nabla \cdot u_k, q) 
& = 0,\\
b^*(u_{k-1}, \theta_k, \chi) +\kappa(\nabla \theta_k, \nabla \chi) 
&= (\gamma, \chi).
\end{align}
For finite element spaces $X_h, Q_h$ satisfying the discrete inf-sup condition as
described in subsection \ref{subsec:mathprelim}, 
we have the equivalent formulation in the discretely divergence-free space 
\eqref{eqn:ddfs}: 
for any $v\in V_h, \chi\in W_h$,
\begin{align} \label{it1}
b(u_{k-1}, u_k , v) + \nu (\nabla u_k, \nabla v)  & = Ri\left(  \langle 0, \theta_{k}\rangle^T , v\right) + (f,v),\\
b^*(u_{k-1}, \theta_k, \chi) +\kappa(\nabla \theta_k, \nabla \chi) &= (\gamma, \chi). 
\label{it2}
\end{align} 
The following subsections develop a framework which will allow us to analyze this 
iteration.

\subsection{A solution operator $G$ corresponding to the fixed point iteration}
We will consider the solution operator of the system \eqref{it1}-\eqref{it2} as the 
fixed-point operator defining the iteration to be accelerated.  
To study this operator, we next formally define it in a slightly more abstract way.

Given $f\in H^{-1}(\Omega)^d$, $\gamma\in H^{-1}(\Omega)$, and $(u,\theta)\in (V_h,W_h)$, consider the problem of finding $(\tilde u,\tilde \theta)\in  (V_h,W_h)$ satisfying
\begin{align} \label{it1b}
b(u, \tilde u , v) + \nu (\nabla \tilde u, \nabla v)  
& = Ri\left(  \langle 0,\tilde \theta \rangle^T , v\right) + (f,v),\\
b^*(u, \tilde \theta, \chi) +\kappa(\nabla \tilde \theta, \nabla \chi) 
&= (\gamma, \chi), \label{it2b}
\end{align} 
 for any $v\in V_h, \chi\in W_h$.

\begin{lemma} \label{wellp1}
For $f\in H^{-1}(\Omega)^d$ and $\gamma\in H^{-1}(\Omega)$, the system  \eqref{it1b}-\eqref{it2b} is well-posed, and solutions satisfy the bounds
\begin{align}
\| \nabla \tilde u \|  \le  K_1  \text{ \quad and \quad } \| \nabla \tilde \theta \|   \le  K_2, \label{bound2}
\end{align}
where $K_1, \ K_2$ are given in \eqref{bd1}.
\end{lemma}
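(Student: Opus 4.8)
The plan is to exploit the block-triangular structure of the system \eqref{it1b}-\eqref{it2b}. Although the buoyancy term $Ri(\langle 0,\tilde\theta\rangle^T,v)$ couples the momentum and temperature unknowns, the convective coefficient in the temperature equation \eqref{it2b} is the \emph{given} velocity $u$, not $\tilde u$. Hence \eqref{it2b} is a self-contained linear problem for $\tilde\theta$ alone, and once $\tilde\theta$ is known, \eqref{it1b} becomes a linear problem for $\tilde u$. I would therefore solve the two equations in sequence, establishing well-posedness and the energy bound for each by the same mechanism: coercivity coming from the skew-symmetry \eqref{ss} of the trilinear forms.

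First I would treat \eqref{it2b}. The associated bilinear form $(\phi,\chi)\mapsto b^*(u,\phi,\chi)+\kappa(\nabla\phi,\nabla\chi)$ on $W_h\times W_h$ is continuous, and by \eqref{ss} it satisfies $b^*(u,\chi,\chi)+\kappa\|\nabla\chi\|^2=\kappa\|\nabla\chi\|^2$, so it is coercive on $W_h$ with constant $\kappa$. The Lax--Milgram lemma (equivalently, injectivity of a linear map on the finite-dimensional space $W_h$) then yields a unique $\tilde\theta$. Testing with $\chi=\tilde\theta$ and using Cauchy--Schwarz gives $\kappa\|\nabla\tilde\theta\|^2=(\gamma,\tilde\theta)\le\|\gamma\|_{-1}\|\nabla\tilde\theta\|$, whence $\|\nabla\tilde\theta\|\le\kappa^{-1}\|\gamma\|_{-1}=K_2$.

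With $\tilde\theta$ and its bound in hand, I would turn to \eqref{it1b}. The bilinear form $(w,v)\mapsto b(u,w,v)+\nu(\nabla w,\nabla v)$ is again coercive on $V_h$ with constant $\nu$ by \eqref{ss}, and the right-hand side $v\mapsto Ri(\langle 0,\tilde\theta\rangle^T,v)+(f,v)$ is a bounded functional, so Lax--Milgram delivers a unique $\tilde u$. Testing with $v=\tilde u$ gives $\nu\|\nabla\tilde u\|^2=Ri(\langle 0,\tilde\theta\rangle^T,\tilde u)+(f,\tilde u)$. Bounding the buoyancy term by Cauchy--Schwarz and two applications of the Poincar\'e inequality, $Ri(\langle 0,\tilde\theta\rangle^T,\tilde u)\le Ri\,C_P^2\|\nabla\tilde\theta\|\|\nabla\tilde u\|$, and the forcing by $\|f\|_{-1}\|\nabla\tilde u\|$, then dividing by $\|\nabla\tilde u\|$ and inserting $\|\nabla\tilde\theta\|\le K_2$ yields $\nu\|\nabla\tilde u\|\le Ri\,C_P^2\kappa^{-1}\|\gamma\|_{-1}+\|f\|_{-1}$, i.e. $\|\nabla\tilde u\|\le K_1$ with $K_1$ exactly as in \eqref{bd1}.

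I do not expect a genuine obstacle here: the result reduces to a pair of standard energy estimates. The only point requiring care is the structural observation that the problem is block-triangular, so that no coupled fixed-point or small-data argument (as in Lemma \ref{sdc}) is needed for this linear sub-problem. Well-posedness follows directly from the coercivity provided by skew-symmetry, and the a priori bounds reproduce the constants $K_1,K_2$ already derived in \eqref{bd1}.
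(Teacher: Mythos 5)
Your proof is correct, and the energy estimates at its core are identical to the paper's: test each equation with its own solution, kill the trilinear terms by the skew-symmetry \eqref{ss}, apply Cauchy--Schwarz and Poincar\'e, and insert the temperature bound into the velocity bound to recover exactly $K_1$ and $K_2$. Where you genuinely differ is in the mechanism for well-posedness. The paper treats \eqref{it1b}--\eqref{it2b} as one coupled linear system on $(V_h,W_h)$, derives the \emph{a priori} bounds first, and then invokes linearity plus finite dimensionality: the bounds force the homogeneous system to have only the zero solution, and injectivity of a square finite-dimensional linear map yields surjectivity, hence existence. You instead make explicit the block-triangular structure (the temperature equation involves only the data $u$, not $\tilde u$), solve the two sub-problems sequentially, and get existence and uniqueness for each from coercivity via Lax--Milgram, with the bounds falling out afterwards. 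Your route costs a little more bookkeeping (you must note continuity of the forms, which follows from \eqref{bbd}), but it buys an argument that does not depend on $V_h,W_h$ being finite dimensional, so it would survive verbatim at the continuous level; the paper's argument is shorter and lets the bound derivation double as the uniqueness proof, but leans essentially on finite dimensionality. Both are sound; yours is arguably the more structurally informative, since the same triangular observation is what later makes $G'$ in Definition \ref{def:gprime} block upper triangular.
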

\begin{proof}
We begin with {\em a priori} bounds.  Suppose solutions exist, and choose 
$\chi=\tilde \theta$ and $v=\tilde u$.  By construction, this vanishes the trilinear 
terms in each equation. After applying Cauchy-Schwarz, Poincar\'e and H\"older 
inequalities, it can be seen that
\begin{align*}
\nu \| \nabla \tilde u \|^2 & \le C_P^2 Ri \| \nabla \tilde\theta \| \| \nabla \tilde u \| + \| f \|_{-1} \| \nabla \tilde u \|, \\
\kappa \| \nabla \tilde \theta \|^2 & \le \| \gamma \|_{-1} \| \nabla \tilde \theta \|.
\end{align*}
The second bound reduces to 
$\| \nabla \tilde \theta \|  \le \kappa^{-1} \| \gamma \|_{-1}$,
and inserting this into the first bound produces
\begin{align*}
\| \nabla \tilde u \| & \le C_P^2 Ri \nu^{-1}   \kappa^{-1} \| \gamma \|_{-1} + \nu^{-1} \| f \|_{-1} .
\end{align*}
Since the system \eqref{it1b}-\eqref{it2b} is linear in $\tilde u$ and $\tilde \theta$,
and finite dimensional, 
these bounds are sufficient to imply solution uniqueness and therefore existence.
\end{proof}

\begin{dfn}\label{defg00}
Define $G: (V_h,W_h) \to (V_h,W_h)$ to be the solution operator of \eqref{it1b}-\eqref{it2b}. 
That is,
\[
(\tilde u,\tilde \theta)=G(u,\theta) .
\]
\end{dfn}
By Lemma \ref{wellp1}, $G$ is well defined.  
The Boussinesq fixed point iteration \eqref{it1}-\eqref{it2} can now be written as 
\[
(u_k,\theta_k)=G(u_{k-1},\theta_{k-1}).
\]
Before we give a norm on $(V_h,W_h)$, we recall scalar multiplication 
on the ordered pair $(u,\theta) \in (V_h,W_h)$ satisfies
$ \alpha \cdot (u,\theta) = (\alpha u, \alpha \theta),$ 
for any $\alpha \in \mathbb{R}$. 
\begin{dfn}
Define the norm $\| (\cdot,\cdot)\|_B: (V_h,W_h) \rightarrow \mathbb{R} $ by
\[
\|(v,w)\|_B := \sqrt{ \nu \| \nabla v\|^2 + \kappa \| \nabla w\|^2}.
\]
\end{dfn}
The weights used in the norm definition come from the natural energy norm of the Boussinesq system, and this norm will be referred to as the B-norm.  
Using this weighted norm both
simplifies the analysis and improves the practical implementation, 
as this norm will be used in the optimization step of the accelerated algorithm.

\subsection{Continuity and Lipschitz differentiability of $G$}

We now prove that $G$ satisfies Assumptions \ref{assume:g} and \ref{assume:fg}.  We begin with Lipschitz continuity.

\begin{lemma}\label{contract}  There exists a positive constant $C_G$ such that $\| G(u,\theta) - G(w,z)\|_B \le C_G \| (u,\theta) - (w,z)\|_B$ for any $(u,\theta), (w,z) \in (V_h,W_h).$ The constant $C_G$ is defined by
\[ 
C_G = \nu^{-1/2}\eta^{-1/2}M \sqrt{2K_1^2 + 3K_2^2 }.
\]
\end{lemma}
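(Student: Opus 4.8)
The plan is to derive a standard energy estimate for the difference of the two images of $G$. First I would set $(\tilde u,\tilde\theta) = G(u,\theta)$ and $(\tilde w,\tilde z)=G(w,z)$, so each pair solves \eqref{it1b}-\eqref{it2b} with the same data $f,\gamma$. Subtracting the two momentum equations and the two energy equations and using that $b$ and $b^*$ are linear in each argument, I would split the convective differences as $b(u,\tilde u,v)-b(w,\tilde w,v)=b(u,\tilde u-\tilde w,v)+b(u-w,\tilde w,v)$ and likewise $b^*(u,\tilde\theta,\chi)-b^*(w,\tilde z,\chi)=b^*(u,\tilde\theta-\tilde z,\chi)+b^*(u-w,\tilde z,\chi)$. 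A key structural observation is that the input temperature never enters the definition of $G$, so after subtraction the only input difference appearing on the right-hand side is $u-w$; the difference $\theta-z$ drops out entirely.

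Next I would test the momentum difference equation with $v=e_u:=\tilde u-\tilde w$ and the energy difference equation with $\chi=e_\theta:=\tilde\theta-\tilde z$. By skew-symmetry \eqref{ss}, the terms $b(u,e_u,e_u)$ and $b^*(u,e_\theta,e_\theta)$ vanish. I would treat the energy equation first: using \eqref{bbd} together with the \emph{a priori} bound $\|\nabla\tilde z\|\le K_2$ from Lemma \ref{wellp1}, the surviving term $b^*(u-w,\tilde z,e_\theta)$ is bounded by $MK_2\|\nabla(u-w)\|\,\|\nabla e_\theta\|$, giving $\|\nabla e_\theta\|\le\kappa^{-1}MK_2\|\nabla(u-w)\|$. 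For the momentum equation, Cauchy--Schwarz and the Poincar\'e inequality bound the buoyancy term by $RiC_P^2\|\nabla e_\theta\|\,\|\nabla e_u\|$, while \eqref{bbd} with $\|\nabla\tilde w\|\le K_1$ bounds the cross term $b(u-w,\tilde w,e_u)$ by $MK_1\|\nabla(u-w)\|\,\|\nabla e_u\|$; dividing through yields $\nu\|\nabla e_u\|\le RiC_P^2\|\nabla e_\theta\|+MK_1\|\nabla(u-w)\|$.

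To finish, I would substitute the energy bound into the velocity bound and invoke the smallness condition \eqref{sdcineq}, which implies $\kappa^{-1}RiC_P^2\le 1$, to obtain $\nu\|\nabla e_u\|\le M(K_1+K_2)\|\nabla(u-w)\|$. Squaring, using $(K_1+K_2)^2\le 2(K_1^2+K_2^2)$, and adding $\kappa$ times the square of the energy bound gives $\|(e_u,e_\theta)\|_B^2 = \nu\|\nabla e_u\|^2+\kappa\|\nabla e_\theta\|^2 \le M^2\big(2\nu^{-1}(K_1^2+K_2^2)+\kappa^{-1}K_2^2\big)\|\nabla(u-w)\|^2$. Replacing each of $\nu^{-1}$ and $\kappa^{-1}$ by $\eta^{-1}=\min\{\nu,\kappa\}^{-1}$ consolidates the bracket into $\eta^{-1}(2K_1^2+3K_2^2)$, and finally $\nu\|\nabla(u-w)\|^2\le\|(u-w,\theta-z)\|_B^2$ produces the stated constant $C_G=\nu^{-1/2}\eta^{-1/2}M\sqrt{2K_1^2+3K_2^2}$. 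I expect no serious obstacle: the only delicate points are splitting the trilinear forms correctly and the constant bookkeeping, in particular using the smallness condition to remove the $Ri$ dependence and using $\eta=\min\{\nu,\kappa\}$ to merge the weights into exactly the claimed form.
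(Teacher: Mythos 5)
Your proposal is correct and follows essentially the same route as the paper's proof: the same splitting of the trilinear differences, the same test functions $e_u$ and $e_\theta$ with skew-symmetry, the same use of the a priori bounds $K_1,K_2$ from Lemma \ref{wellp1} and of the condition $\kappa^{-1}RiC_P^2\le 1$ from \eqref{sdcineq} to absorb the buoyancy coupling, and the identical constant bookkeeping via $(K_1+K_2)^2\le 2(K_1^2+K_2^2)$, $\eta=\min\{\nu,\kappa\}$, and $\nu\|\nabla(u-w)\|^2\le\|(u-w,\theta-z)\|_B^2$. The only (harmless) difference is that you make explicit the observation that $G$ depends on the input only through its velocity component, which the paper leaves implicit in its subtracted equations.
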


\begin{proof}For any $(u,\theta) \in (V_h,W_h)$, denote $( G_1(u,\theta), G_2(u,\theta))$ as the components of $G(u,\theta)$.
Let $(u,\theta),(w,z) \in (V_h,W_h)$.  Set $G(u,\theta) = (G_1(u,\theta), G_2(u,\theta))$ and $G(w,z) = (G_1(w,z), G_2(w,z))$.  Then for all $(v,\chi) \in (V_h,W_h)$ we have
\begin{align}
\label{fg1a}b(u,G_1(u,\theta), v) + \nu(\nabla G_1(u,\theta), \nabla v) &= Ri (\langle 0, G_2(u, \theta) \rangle^T, v) + (f,v) ,\\
\label{fg2a} b^*(u,G_2(u,\theta), \chi) + \kappa (\nabla G_2(u,\theta), \nabla \chi) & = (\gamma, \chi),\\
\label{fg3a} b(w, G_1(w,z), v) + \nu (\nabla G_1(w,z), \nabla v) &= Ri(\langle 0,G_2(w,z)\rangle^T, v) + (f,v) ,\\
\label{fg4a} b^*(w,G_2(w,z), \chi) + \kappa (\nabla G_2(w,z), \nabla \chi)  &= (\gamma , \chi).
\end{align}
Subtracting 
\eqref{fg3a}-\eqref{fg4a} from
\eqref{fg1a}-\eqref{fg2a} gives
\begin{align}
\label{fg5}
b(u,G_1(u,\theta) -  G_1(w,z),v) + b(u-w,G_1( w,z),v) 
 &+ \nu\left(\nabla (G_1( u,\theta) - G_1( w,z)), \nabla v\right) 
\nonumber \\ =
& Ri (\langle 0, G_2(u,\theta)- G_2(w,z) \rangle^T , v), 
\\ 
b^*(u, G_2(u,\theta) - G_2(w,z),\chi)+ b^*(u-w,G_2(w,z),\chi)  
  &+ \kappa \left( \nabla (G_2(u, \theta) - G_2(w, z)),
\nabla \chi \right) = 0, 
\label{fg6}
\end{align}
Choosing $\chi = G_2(u,\theta) - G_2(w,z)$ in \eqref{fg6} eliminates the first term, 
then applying \eqref{bbd} and \eqref{bound2} gives
\begin{eqnarray}
\label{bdg2}
 \|\nabla (G_2(u,\theta) - G_2(w,z))\| \le \kappa^{-1}M K_2 \| \nabla (u-w)\|.
\end{eqnarray}
Similarly, choosing $v = G_1( u,\theta) - G_1(w,z)$ in \eqref{fg5} eliminates the 
first nonlinear term, and applying \eqref{bbd}, H\"older and Poincar\'e inequalities 
produces
\begin{align}
\label{bdg1}
 \|\nabla (G_1( u,\theta) - G_1(w,z))\|
&\le  \nu^{-1}C_P^2 Ri \| \nabla (G_2(u,\theta) - G_2(w,z))\|   + \nu^{-1}M \| \nabla (u-w)\|\|\nabla G_1( w,z)\|  \nonumber \\
&\le \nu^{-1} M(\kappa^{-1}K_2RiC_P^2  + K_1)\|\nabla (u-w)\| \nonumber \\
&\le  \nu^{-1}M(K_1+K_2) \|\nabla (u-w)\|,
 \end{align}
thanks to \eqref{bound2} of Lemma \ref{wellp1} and \eqref{sdcineq}.
Combining \eqref{bdg2}-\eqref{bdg1} gives
 \[
\| G(u,\theta) - G( w, z) \|_B \le  \nu^{-1/2}\eta^{-1/2}M \sqrt{2K_1^2 + 3K_2^2 }
 \| (u,\theta)-(w,z)\|_B.
\]
Thus $G$ is Lipschitz continuous with constant 
{$C_G = \nu^{-1/2}\eta^{-1/2}M \sqrt{2K_1^2 + 3K_2^2 }$.}
\end{proof}

Next, we show that $G$ is Lipschitz Fr\'echet differentiable. 
We will first define a mapping $G'$,
and then in Lemma \ref{LFD} confirm that it is the Fr\'echet derivative operator of $G$.  
\begin{dfn}\label{def:gprime}
Given $(u,\theta)\in (V_h,W_h)$, define an operator $G'(u,\theta;\cdot,\cdot): (V_h,W_h) \to (V_h,W_h)$ by 
\[
G'(u,\theta;h,s) := \left(G_1'(u,\theta;h,s), G_2'(u,\theta;h,s)\right),
\]
 satisfying  for all $(h,s), (v,\chi)\in (V_h,W_h)$,
\begin{align}
b(h,G_1(u,\theta), v) + b(u, G_1'(u,\theta;h,s), v) + \nu (\nabla G_1'(u,\theta;h,s), \nabla v)  &= 
  Ri (\langle 0, G_2'(u,\theta; h,s)\rangle^T,v), \nonumber\\
   \label{defg1} \\
b^*(h, G_2(u,\theta), \chi) + b^*(u ,G_2'(u,\theta;h,s), \chi)  
+ \kappa(\nabla G_2'(u,\theta;h,s), \nabla \chi) & = 0. \label{defg2}
\end{align}
\end{dfn}
Once it is established that $G'$ is well-defined and is the Fr\'echet derivative of 
$G$, it follows that $G'(u,\theta; \cdot, \cdot)$ is the Jacobian matrix of $G$ at 
$(u,\theta)$. 
From the partially decoupled system \eqref{defg1}-\eqref{defg2}, it is clear that 
$G'(u,\theta; \cdot,\cdot)$ is a block upper triangular matrix. 
Applied to any $(h,s) \in (V_h,W_h)$, the resulting $G'(u,\theta; h,s)$ 
can be written componentwise as
$\left(G_1'(u,\theta;h,s), G_2'(u,\theta;h,s)\right) \in (V_h, W_h).$
\begin{lemma} \label{LFD}
The Boussinesq operator $G$ is Lipschitz Fr\'echet differentiable: there exists a constant $ \hat C_G$ such that for all $(u,\theta),\ (w,z), \ (h,s) \in (V_h,W_h)$
\begin{align}\label{eqn:lfd1} 
\| G'(u,\theta; h,s) \|_B \le C_G\|(h,s)\|_B ,
\end{align}
and 
\begin{align}\label{eqn:lfd2}
\| G'(u+h ,\theta+s;w,z) - G'(u,\theta;w,z) \|_B \le \hat C_G \|(h,s) \|_B\| (w,z)\|_B,
\end{align}
where $ C_G$ is defined in Lemma \ref{contract}.
\end{lemma}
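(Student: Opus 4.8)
The plan is to establish the lemma in four moves: well-posedness of the operator $G'$, the operator-norm bound \eqref{eqn:lfd1}, the verification that $G'$ is genuinely the Fr\'echet derivative, and finally the Lipschitz bound \eqref{eqn:lfd2}. First I would observe that, for a fixed base point $(u,\theta)$, the defining system \eqref{defg1}--\eqref{defg2} is linear in the unknowns $(G_1'(u,\theta;h,s),G_2'(u,\theta;h,s))$ and carries exactly the same bilinear structure as \eqref{it1b}--\eqref{it2b}, with the terms $b(h,G_1(u,\theta),v)$ and $b^*(h,G_2(u,\theta),\chi)$ playing the role of bounded linear data; note in particular that $s$ does not appear, so the derivative depends on the direction only through $h$. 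Well-posedness then follows by the a priori argument of Lemma \ref{wellp1}. For \eqref{eqn:lfd1} I would test the energy equation \eqref{defg2} with $\chi = G_2'(u,\theta;h,s)$, using skew-symmetry \eqref{ss} to eliminate $b^*(u,G_2',G_2')$ and \eqref{bbd} together with $\|\nabla G_2(u,\theta)\|\le K_2$ from \eqref{bound2} to control $b^*(h,G_2(u,\theta),G_2')$; this reproduces \eqref{bdg2} with $(u-w)$ replaced by $h$. Testing the momentum equation \eqref{defg1} with $v=G_1'(u,\theta;h,s)$, invoking $\|\nabla G_1(u,\theta)\|\le K_1$ and the small-data condition \eqref{sdcineq}, yields the analog of \eqref{bdg1}. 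Assembling the two estimates into the $B$-norm reproduces the constant $C_G$ of Lemma \ref{contract} exactly, since $h$ enters the derivative system precisely as $(u-w)$ enters the difference system there.

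To confirm $G'$ is the Fr\'echet derivative, I would set $R = G(u+h,\theta+s)-G(u,\theta)-G'(u,\theta;h,s)$ with components $(R_1,R_2)$, subtract the defining systems for the three operators, and telescope the convection terms via the bilinearity identity $b(u+h,\cdot,\cdot)=b(u,\cdot,\cdot)+b(h,\cdot,\cdot)$ (and likewise for $b^*$). The resulting systems for $R_1,R_2$ are again of well-posed form, now driven by the remainders $b(h,G_1(u+h,\theta+s)-G_1(u,\theta),v)$ and $b^*(h,G_2(u+h,\theta+s)-G_2(u,\theta),\chi)$. Testing with $(R_1,R_2)$, eliminating the diagonal convection via \eqref{ss}, and bounding the remainders with \eqref{bbd} together with the Lipschitz estimates \eqref{bdg2}--\eqref{bdg1} of Lemma \ref{contract} — which make each $\|\nabla(G_i(u+h,\theta+s)-G_i(u,\theta))\|$ of order $\|\nabla h\|$ — shows $\|R\|_B = \bigo(\|\nabla h\|^2)=\bigo(\|(h,s)\|_B^2)$. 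Dividing by $\|(h,s)\|_B$ and letting it tend to zero gives differentiability.

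Finally, for \eqref{eqn:lfd2} I would set $D=G'(u+h,\theta+s;w,z)-G'(u,\theta;w,z)$ with components $(D_1,D_2)$ and subtract the two derivative systems, which share the direction $(w,z)$ but differ in base point. Telescoping the convection terms as above produces a well-posed system for $D$ whose data consist of two kinds of remainder: one of the form $b(w,G_1(u+h,\theta+s)-G_1(u,\theta),v)$, controlled by Lemma \ref{contract}, and one of the form $b(h,G_1'(u+h,\theta+s;w,z),v)$, controlled by the already-proven bound \eqref{eqn:lfd1} applied at base point $(u+h,\theta+s)$; analogously for $b^*$. Testing with $(D_1,D_2)$, killing the diagonal terms, and collecting yields $\|\nabla D_i\| = \bigo(\|\nabla w\|\,\|\nabla h\|)$, whence $\|D\|_B \le \hat C_G\|(h,s)\|_B\|(w,z)\|_B$ after converting gradient norms to the $B$-norm through the weight $\nu^{-1/2}$.

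The main obstacle is this last step: \eqref{eqn:lfd2} carries the heaviest bookkeeping, since every remainder term must be matched to the correct previously established estimate — the difference estimates \eqref{bdg2}--\eqref{bdg1} for terms involving $G(u+h,\theta+s)-G(u,\theta)$ and the operator bound \eqref{eqn:lfd1} for terms involving $G'(u+h,\theta+s;w,z)$ — while the constants must be tracked carefully through the $\nu,\kappa,\eta$ weights when passing from gradient norms to the $B$-norm. A secondary care point is ensuring the Taylor remainder $R$ is genuinely second order rather than merely small, since that is what upgrades continuity of $G'$ to Fr\'echet differentiability of $G$.
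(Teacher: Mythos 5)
Your proposal is correct and follows essentially the same route as the paper's own proof: energy estimates on the linear system \eqref{defg1}--\eqref{defg2} (testing with $G_1',G_2'$ and using skew-symmetry, \eqref{bbd}, \eqref{bound2}, and \eqref{sdcineq}) to get \eqref{eqn:lfd1} and well-posedness from linearity plus finite dimensionality, then a second-order remainder estimate via Lemma \ref{contract} for Fr\'echet differentiability, and finally subtraction of the two derivative systems with the same two remainder terms for \eqref{eqn:lfd2}. The only deviation is a trivial one in the last step --- your telescoping leaves the data term $b(h,G_1'(u+h,\theta+s;w,z),v)$ at the perturbed base point with diagonal term $b(u,e_1,v)$, whereas the paper keeps $b(h,G_1'(u,\theta;w,z),v)$ with diagonal term $b(u+h,e_1,v)$ --- which is immaterial since the bound \eqref{eqn:lfd1} is uniform in the base point and both diagonal terms vanish under the test functions.
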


\begin{proof}
The first part of the proof shows that  $G$ is Fr\'echet differentiable, and
\eqref{eqn:lfd1} holds. We begin by finding an upper bound on the norm of $G'$ and 
then showing $G'$ is well-defined. 
Setting $\chi = G_2'(u,\theta; h,s)$ in \eqref{defg2}  eliminates 
the second nonlinear term and  gives 
\begin{equation}
\label{bddg2}
\| \nabla G_2'(u,\theta; h,s)\| \le  \kappa^{-1}M \| \nabla h\| \| \nabla G_2(u+h,\theta+s)\| \le \kappa^{-1}MK_2\|\nabla h\|,
\end{equation}
thanks to Lemma \ref{wellp1}. Similarly, setting $v = G_1'(u,\theta; h,s)$ 
in \eqref{defg1} eliminates the second nonlinear term and  yields 
\begin{align}
\label{bddg1}
 \| \nabla G_1'(u,\theta; h,s) \| 
&\le  \nu^{-1}RiC_P^2 \|\nabla G_2'(u,\theta;h,s)\|  + \nu^{-1}M \| \nabla h\| \|\nabla G_1(u,\theta)\|  \nonumber  \\
 &\le \nu^{-1}M \left(  \kappa^{-1}  K_2RiC_P^2 +K_1 \right)\| \nabla h\| \nonumber\\
 &\le \nu^{-1}M(K_1+K_2 ) \|\nabla h\|,
\end{align}
thanks to Lemma \ref{wellp1}, \eqref{bddg2} and the small data condition 
\eqref{sdcineq}.
Combining the bounds \eqref{bddg2}-\eqref{bddg1} yields
\begin{eqnarray}
\| G'(u,\theta; h,s) \|_B \le  C_G \|(h,s)\|_B.
\label{fdbd}
\end{eqnarray}
Since system \eqref{defg1}-\eqref{defg2} is linear and finite dimensional, \eqref{fdbd} is sufficient to imply the system is well-posed.  
Therefore, $G'$ is well-defined and uniformly bounded over $(V_h,W_h),$ since the 
bound is independent of $(u,\theta)$.

Next, we prove $G'$ given by definition \ref{def:gprime} is the Fr\'echet derivative 
operator of $G$. That is, given $(u,\theta)\in (V_h,W_h)$, there exists some constant $C$ such that for any $(h,s)\in (V_h,W_h)$ $$\| G(u+h,\theta+s) - G(u,\theta) - G'(u,\theta; h,s)\|_B \le C\|(h,s)\|^2 .$$
For notational ease, set $\tilde g_1 = G_1(u+h,\theta+s) - G_1(u,\theta) - G_1'(u,\theta;h,s)$ and $\tilde g_2 = G_2(u+h,\theta+s) - G_2(u,\theta) - G_2'(u,\theta;h,s)$.
To construct the left hand side of the inequality above, we begin with the following equations: 
for any $(u,\theta), (h,s), (v,\chi) \in (V_h,W_h)$,
\begin{align}
b(u+h, G_1(u+h,\theta+s), v) + \nu(\nabla G_1(u+h,\theta+s), \nabla v) &=  
Ri(\langle 0, G_2(u+h,\theta+s)\rangle^T ,v) + (f,v),  \nonumber \\  & \label{eqnd1}\\
b^*(u+h, G_2(u+h,\theta+s), \chi) + \kappa (\nabla G_2(u+h,\theta+s), \nabla \chi) 
&= (\gamma, \chi),\label{eqnd2}\\
b(u,G_1(u,\theta), v) + \nu(\nabla G_1(u,\theta), \nabla v) 
&=   Ri(\langle 0, G_2(u,\theta)\rangle^T, v) + (f,v) ,  \label{eqnd3}\\
b^*(u,G_2(u,\theta), \chi) + \kappa (\nabla G_2(u,\theta) , \nabla \chi)&= (\gamma, \chi). \label{eqnd4}
\end{align}
Subtracting \eqref{eqnd1}-\eqref{eqnd2} from \eqref{eqnd3}-\eqref{eqnd4} 
and \eqref{defg1}-\eqref{defg2}, and then choosing $v = \tilde g_1, \chi = \tilde g_2$, we obtain by
application of \eqref{bbd}, H\"older and Sobolev inequalities 
\cite[Chapter 6]{laytonbook} that
\begin{align*}
 \nu\| \nabla \tilde g_1 \|^2 
& =   -b(h,G_1(u+h,\theta + s) - G_1(u,s),  \tilde g_1 )  + Ri(\langle 0, \tilde g_2\rangle^T, \tilde g_1)\\
&\le  M \| \nabla h\| \|\nabla (G_1(u+h,\theta+s) - G_1(u,\theta)\| \| \nabla \tilde g_1 \|+ RiC_P^2\|\nabla \tilde g_2\| \|\nabla \tilde g_1\|,\\
\kappa \|\nabla \tilde g_2\|^2 
&= -b(h,G_2(u+h,\theta+s) - G_2(u,\theta), \tilde g_2),
\end{align*}
which reduces to 
\begin{align}
\label{eqn5}
\kappa \|\nabla \tilde g_2\|^2 &\le \kappa^{-1}M^2 \|\nabla h\|^2\|\nabla (G_2(u+h,\theta+s) - G_2(u,\theta))\|^2,
\end{align}
and
\begin{align}\label{eqn6}
 & \nu \|\nabla \tilde g_1\|^2   \nonumber\\
 &\le    2\nu^{-1} M^2\| \nabla h\|^2 
\big(  \|\nabla (G_1(u+h,\theta+s) - G_1(u,\theta)\|^2  
\nonumber \\
&+ Ri^2C_P^4 \kappa^{-2}\|\nabla (G_2(u+h,\theta+s) - G_2(u,\theta))\|^2 \big) 
\nonumber \\
&\le  2\eta^{-1}\nu^{-1} M^2\| \nabla h\|^2 \|\nabla  (G(u+h,\theta+s) - G(u,\theta))\|_B^2 , 
\end{align}
thanks to Young's inequality and \eqref{sdcineq}.
Combining bounds \eqref{eqn5}-\eqref{eqn6} produces
\begin{align*}
  \| (\tilde g_1,\tilde g_2) \|_B^2 &
\le 3\eta^{-2}M^2 \|\nabla h\|^2 \|\nabla (G(u+h,\theta+s) - G(u,\theta))\|_B^2 \le 
3\eta^{-3}M^2C_G^2 \|(h,s)\|_B^4.
\end{align*}
By the definitions of $\tilde g_1,\tilde g_2$, this shows
\begin{eqnarray}
\| G(u+h,\theta+s) - G(u,\theta) - G'(u,\theta;h,s) \|_B   \le
2\eta^{-3/2}MC_G\|(h,s)\|_B^2,
\label{fdiff}
\end{eqnarray}
which demonstrates Fr\'echet differentiability of $G$ at $(u,\theta)$.
As \eqref{fdiff} holds for arbitrary $(u,\theta)$, 
we have that $G$ is Fr\'echet differentiable on all of $(V_h,W_h)$.

The second part of the proof shows that $G'$ is Lipschitz continuous over $(V_h,W_h)$.  By the definition of $G'$, the following equations hold 
\begin{align} 
\label{eqnld1}
b(w,G_1(u,\theta),v) + b(u, G_1'(u,\theta;w,z), v) + \nu (\nabla G_1'(u,\theta;w,z), \nabla v)  & =    \nonumber \\
Ri(\langle 0,G_2'(u,\theta; w,z)\rangle^T ,v), &\\ 
\label{eqnld2}
b^*(w,G_2(u,\theta), \chi) + b^*(u,G_2'(u,\theta;w,z), \chi) 
+ \kappa (\nabla G_2'(u,\theta;w,z), \nabla \chi) & = 0,\\
b(w,G_1(u+h,\theta+s),v) + b(u+h, G_1'(u+h,\theta+s;w,z), v) & \nonumber \\
+ \nu (\nabla G_1'(u+h,\theta+s;w,z), \nabla v)   =  
Ri(\langle 0,G_2'(u+h,\theta+s; w,z)\rangle^T ,v), &\label{eqnld3} \\
b^*( w,G_2(u+h,\theta+s), \chi) 
+ b^*(u+h,G_2'(u+h,\theta+s;w,z), \chi) & \nonumber \\
+ \kappa (\nabla G_2'(u+h,\theta+s;w,z), \nabla \chi) & = 0,
\label{eqnld4}
\end{align}
for all $(u,\theta), (w,z), (h,s), (v,\chi)\in (V_h,W_h)$.  Letting  $e_1 := G_1'(u+h,\theta+s;w,z) - G_1'(u,\theta;w,z),\ e_2 := G_2'(u+h,\theta+s;w,z) - G_2'(u,\theta;w,z)$ and 
subtracting \eqref{eqnld1}-\eqref{eqnld2} from
\eqref{eqnld3}-\eqref{eqnld4} 
gives
\begin{align*}
b(w,G_1(u+h, \theta+s) - G_1(u,\theta), v) +b(h,G_1'(u,\theta; w,z), v) + b(u+h, e_1, v) + \nu(\nabla e_1, \nabla v) &=   \\
 Ri (\langle 0, e_2\rangle^T,  v),& \\
 b^*(w, G_2(u+h,\theta+s) - G_2(u,\theta), \chi) + b^*(h,G_2'(u,\theta;w,z),\chi) + b^*(u+h,e_2,\chi)
 + \kappa (\nabla e_2, \nabla v) & = 0.
\end{align*}
Setting $v = e_1 $, $\chi = e_2 $ eliminates the last nonlinear terms in both equations and produces
\begin{align*}
\nu\|\nabla e_1\|^2 & \le RiC_P^2\|\nabla e_2\|\|\nabla e_1\| + M\|\nabla w\|\|\nabla (G_1(u+h, \theta+s) - G_1(u,\theta))\| \|\nabla e_1\| \\
& \ \ \ + M\|\nabla h\| \|\nabla G_1'(u,\theta ;w,z)\|\|\nabla e_1\|,\\
\kappa \|\nabla e_2\|^2 &\le M\|\nabla w\| \|\nabla (G_2(u+h,\theta+s) - G_2(u,\theta))\| \|\nabla e_2\| + M \|\nabla h\| \|\nabla G_2'(u,\theta;w,z)\|\|\nabla e_2\| ,
\end{align*}
thanks to \eqref{bbd}.
Thus from \eqref{bdg2}-\eqref{bdg1}, \eqref{bddg2}-\eqref{bddg1}, 
 \eqref{fdbd} and  Lemma \ref{contract}, we obtain
\begin{align*}
 \|\nabla e_2\| & \le 2M^2 \kappa^{-2}K_2\|\nabla w\| \|\nabla h\|, \\
\|\nabla e_1\| & \le \nu^{-1}RiC_P^2 \|\nabla e_2\| + 2\nu^{-2}M^2(K_1+K_2)\|\nabla h\|\nabla w\| \\
&\le 2\nu^{-1}\eta^{-1}M^2 (K_1+2K_2)\|\nabla w\|\|\nabla h\|.
\end{align*}
Combing these bounds gives
\begin{eqnarray*}
\|(e_1, e_2)\|_B^2 \le  4\nu^{-2} \eta^{-3}M^4 (2K_1^2 + 9K_2^2) \|(h,s)\|_B^2\|(w,z)\|_B^2 = \hat C_G^2\|(h,s)\|_B^2\|(w,z)\|_B^2
\end{eqnarray*}
where $\hat C_G =2\nu^{-1} \eta^{-3/2}M^2 (2K_1^2 + 9K_2^2)^{1/2}$ is the Lipschitz constant of $G'$.
Thus $G'(u,\theta;\cdot,\cdot)$ is Lipschitz continuous with constant $\hat C_G$.  
As the bound holds for arbitrary $(u,\theta)$, we have that $G$ is Lipschitz 
continuously differentiable on $(V_h,W_h)$ with constant $\hat C_G$.
\end{proof}

{It remains to show that Assumption \ref{assume:fg} is satisfied for the solution operator $G$.  This amounts to
finding a constant $\sigma>0 $ such that  for any $(u,\theta), (w,z)\in (V_h, W_h)$ with $\|\nabla u\| , \|\nabla w\|\le K_1$ 
\begin{equation}\label{siglem}
\| F(u,\theta) - F(w,z) \|_B \ge \sigma \| (u,\theta) - (w,z)\|_B, 
\end{equation}
where $F(u,\theta) : = G(u,\theta) - (u,\theta).$
\begin{lemma}
\label{lemma2} Assume the problem data is such that $G$ is contractive, i.e. $C_G<1$.  
Then there exists a constant $\sigma>0 $ such that \eqref{siglem} holds for any $(u,\theta), (w,z)\in (V_h, W_h)$ with $\|\nabla u\| , \|\nabla w\|\le K_1.$
\end{lemma}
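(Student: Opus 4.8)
The plan is to exploit the fact that $F = G - I$ differs from the identity only by the contractive map $G$, so that the desired lower bound on $\|F(u,\theta) - F(w,z)\|_B$ follows immediately from a reverse triangle inequality. First I would use the definition $F(u,\theta) = G(u,\theta) - (u,\theta)$ together with the linearity of scalar multiplication and subtraction on the ordered pairs in $(V_h,W_h)$ to write
$$F(u,\theta) - F(w,z) = \big[(u,\theta) - (w,z)\big] - \big[G(u,\theta) - G(w,z)\big].$$

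Next, applying the reverse triangle inequality in the B-norm gives
$$\|F(u,\theta) - F(w,z)\|_B \ge \|(u,\theta) - (w,z)\|_B - \|G(u,\theta) - G(w,z)\|_B.$$
The subtracted term is controlled directly by Lemma \ref{contract}, which supplies the global Lipschitz bound $\|G(u,\theta) - G(w,z)\|_B \le C_G\|(u,\theta) - (w,z)\|_B$. Substituting this in yields
$$\|F(u,\theta) - F(w,z)\|_B \ge (1 - C_G)\,\|(u,\theta) - (w,z)\|_B,$$
so that \eqref{siglem} holds with $\sigma := 1 - C_G$. I would then invoke the standing hypothesis of the lemma, namely that the data is small enough that $G$ is contractive, $C_G < 1$, which is precisely what certifies $\sigma = 1 - C_G > 0$.

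I do not expect a genuine obstacle here: the argument is a two-line consequence of the contractivity already established in Lemma \ref{contract}. The only points worth flagging are, first, that the constraint $\|\nabla u\|,\|\nabla w\| \le K_1$ is not actually needed along this route, since Lemma \ref{contract} furnishes a Lipschitz constant valid on all of $(V_h,W_h)$; and second, that the contractivity hypothesis $C_G < 1$ is essential, because without it the reverse triangle inequality produces only a vacuous (possibly negative) bound. Thus some smallness of the problem data is genuinely required to verify Assumption \ref{assume:fg}, and the resulting $\sigma$ degrades to zero as $C_G \to 1$.
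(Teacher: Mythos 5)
Your proof is correct and is in substance identical to the paper's: the paper simply cites Remark~4.2 of \cite{PR19a}, which is precisely this reverse-triangle-inequality argument, and arrives at the same constant $\sigma = 1 - C_G$ (your observation that the bound $\|\nabla u\|,\|\nabla w\|\le K_1$ is not needed here is also accurate). The only blemish is a harmless sign flip in your displayed identity --- it should read $F(u,\theta)-F(w,z) = \bigl[G(u,\theta)-G(w,z)\bigr] - \bigl[(u,\theta)-(w,z)\bigr]$ --- which does not affect the norm inequality that follows.
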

\begin{proof}
Since $G$ is contractive, it follows from {\it Remark 4.2} in \cite{PR19a} that \eqref{siglem} holds with $\sigma=1-C_G$.
\end{proof}

This shows 
that Assumption 2.4 is satisfied under the data restriction that $C_G<1$, which is similar but not equivalent
to the uniqueness conditions in section 2.  However, in our numerical tests, which used data far larger than these restrictions, 
the $\sigma_k$'s calculated at each iteration (i.e. \eqref{siglem} with $u=u_k,\theta=\theta_k,w=u_{k-1},z=\theta_{k-1}$) were in general no smaller
than $10^{-3}$.  Hence it may be possible to prove that Assumption \ref{assume:fg} holds under less restrictive data restrictions.
}

\subsection{Accelerating the decoupled Boussinesq iteration}\label{subsec:accelbouss}

In this section, we provide the algorithm of Anderson acceleration applied to the 
decoupled Boussinesq system \eqref{it1} - \eqref{it2} with either Dirichlet \eqref{dbc} or mixed 
boundary conditions \eqref{mbc}.
The one-step residual bound is stated below for Boussinesq solve operators $G$.
\begin{alg}[Anderson accelerated iterative method for Boussinesq equations] 
\label{algaa}
\noindent
\begin{itemize} 
\item[Step 0] Give an initial $u_0\in X_h, \theta_0 \in W_h$.
\item[Step 1a)] Find $\tilde \theta_1\in W_h$ satisfying for all $\chi \in W_h$ 
\begin{eqnarray*}
b^*(u_{0}, \tilde \theta_1, \chi) +\kappa(\nabla \tilde\theta_1, \nabla \chi) &=& (\gamma, \chi).
\end{eqnarray*}
\item[Step 1b)] Find  $ \tilde u_1\in V_h$ satisfying for all $v\in V_h$
\begin{eqnarray*}
b(u_{0}, \tilde u_1, v) + \nu (\nabla \tilde u_1, \nabla v)
& =& Ri\left(  \langle 0, \tilde \theta_1\rangle^T, v\right) + (f,v).
\end{eqnarray*}
Then set $u_1 = \tilde u_1, \theta_1 = \tilde \theta_1$, and $w_1 = \tilde u_1 - u_0, \ z_1 = \tilde \theta_1 - \theta_0.$
\item[Step k] For $k = 2,3,\dots$, set $m_k = \min\{k-1 ,m\}$.
\begin{itemize}
\item[[a.]] Find $\tilde \theta_{k}\in W_h$ by solving 
\begin{eqnarray}\label{eqnaa2}
b^*( u_{k-1},\tilde \theta_{k}, \chi) +\kappa(\nabla \tilde \theta_{k}, \nabla \chi) &=& (\gamma, \chi) ,
\end{eqnarray}
\item[[b.]]  Find $\tilde u_{k} \in V_h$ by solving
\begin{eqnarray} \label{eqnaa1}
b(u_{k-1}, \tilde u_{k} , v) + \nu (\nabla \tilde u_{k}, \nabla v)  
& =& Ri\left(  \langle 0, \tilde  \theta_{k}\rangle^T , v\right) + (f,v),
\end{eqnarray}
and then compute $w_{k} = \tilde u_{k} - u_{k-1}, z_{k} = \tilde \theta_{k} - \theta_{k-1}$.
\item[[c.]] Solve the minimization problem \begin{align}
\min\| (w_k^\alpha, z_k^\alpha)\|_B
\end{align} for $\{\alpha_j^{k}\}_{j= k -m_k}^{k-1}$
where $(w_k^\alpha, z_k^\alpha) \coloneqq  \left(1- \sum\limits_{j=k-m_k}^{k-1} \alpha_j^{k}\right)(w_{k}, z_{k}) +  \sum\limits_{j = k-m_k}^{k-1} \alpha_j^{k+1} (w_{j}, z_{j})  .$
\item[[d.]] For damping factor $0< \beta_k \le 1$, set 
\begin{multline*}
(u_{k}, \theta_{k}) =   \left( 1-\sum\limits_{j=k-m_k}^{k-1}\alpha_j^{k}\right)(u_{k-1},\theta_{k-1}) +  \sum\limits_{j = k-m_k}^{k-1} \alpha_j^{k} (u_{j-1},\theta_{j-1})  \\
+ \beta_k\left(\left(1- \sum\limits_{j=k-m_k}^{k-1} \alpha_j^{k}\right)(w_{k}, z_{k}) +  \sum\limits_{j = k-m_k}^{k-1} \alpha_j^{k+1} (w_{j}, z_{j}) \right).
\end{multline*} 
\end{itemize}
\end{itemize}
\end{alg}

As the Boussinesq operator $G$ satisfies Assumptions \ref{assume:g} and 
\ref{assume:fg}, we have the following convergence result of Algorithm \ref{algaa} 
by directly applying {\it Theorem 5.5}  from \cite{PR19a}.

\begin{thm}[Convergence result for Algorithm \ref{algaa}]
\label{thmaa}
The $k^{th}$ step residual $(w_{k}, z_{k})$ generated from Algorithm \ref{algaa} satisfies 
\begin{multline*}
\|(w_{k+1}, z_{k+1})\|_B \le \| (w_{k}, z_{k})\|_B \Bigg(  \xi_{k}(1-\beta_{k} + \hat K_g\beta_{k} ) + \frac{C \hat C_G\sqrt{1-\xi_k^2}}{2} \bigg( \|(w_{k},z_{k})\|_B h(\xi_{k}) 
\\
 + 2\sum\limits_{j= k- m_{k}+1}^{k-1} (k-j)\|(w_j, z_j)\|_B h(\xi_j) + m_{k-1}\|(w_{k-m_{k}}, z_{k-m_{k}})\|_B h(\xi_{k-m_{k}})   \bigg)  \Bigg),
\end{multline*}
where $\xi_k := \left\| (w_k^\alpha, z_k^\alpha) \right\|_B/\| (w_k, z_k)\|_B,$ $h(\xi_j) \le C \sqrt{1-\xi_j^2} + \beta_{j}\xi_j$, and $C$ depends on a lower bound of the direction sine between $(w_{j+1},z_{j+1})-(w_j,z_j)$ and $span\{(w_{n+1},z_{n+1})  - (w_n,z_n)\}_{n = j-m_k+1}^{j-1}.$
\end{thm}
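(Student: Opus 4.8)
The plan is to read this statement as the direct specialization of the abstract one-step bound of Theorem \ref{thm:genm} to the concrete fixed-point operator $G$ of Definition \ref{defg00}, working in the normed space $Y=(V_h,W_h)$ equipped with the B-norm and taking $g:=G$. Consequently the proof reduces to three tasks: verify that $G$ satisfies Assumptions \ref{assume:g} and \ref{assume:fg}, match the abstract constants with their Boussinesq counterparts, and then quote the conclusion of Theorem \ref{thm:genm} after a translation of notation.

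First I would confirm Assumption \ref{assume:g}. The operator $G$ has a fixed point, namely the solution of \eqref{b1a}-\eqref{b2a}, whose existence and uniqueness follows from Lemma \ref{sdc} under the small data condition \eqref{sdcineq}; indeed a pair with $(u,\theta)=G(u,\theta)$ is exactly one for which the convecting field coincides with the solution, which is \eqref{b1a}-\eqref{b2a}. Part~1 of the assumption, $\nr{g'(x)}\le C_0$, is precisely the uniform operator-norm bound \eqref{eqn:lfd1} of Lemma \ref{LFD}, so we take $C_0=\hat K_g=C_G$ with $C_G$ the constant of Lemma \ref{contract}; the bound is independent of the base point, as required. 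Part~2, $\nr{g'(x)-g'(y)}\le C_1\nr{x-y}$, is the Lipschitz estimate \eqref{eqn:lfd2}, giving $C_1=\hat C_G$. Together these say $G\in C^1(Y)$ with the stipulated constants.

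Next I would verify Assumption \ref{assume:fg}, the lower bound relating consecutive residual and iterate differences. This is the only hypothesis not obtained for free from the differentiability analysis, and it is exactly the content of Lemma \ref{lemma2}: under the contraction hypothesis $C_G<1$ one has $\nrs{F(u,\theta)-F(w,z)}_B\ge\sigma\nrs{(u,\theta)-(w,z)}_B$ with $\sigma=1-C_G$ and $F(u,\theta):=G(u,\theta)-(u,\theta)$, which is precisely Assumption \ref{assume:fg} for $g=G$ applied along the sequence of iterates. I expect this to be the main (and essentially only) obstacle, since it is the single place where a genuine data restriction $C_G<1$ must be imposed rather than inherited directly from the earlier lemmas.

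Finally, with both assumptions in force and with the direction-sine lower bound on the columns of $F_j$ assumed exactly as in the hypothesis of Theorem \ref{thm:genm}, I would apply that theorem verbatim, substituting $C_0\mapsto\hat K_g=C_G$ and $C_1\mapsto\hat C_G$ into \eqref{eqn:genm}. It then remains only to identify the abstract quantities with their Boussinesq forms: the abstract residual becomes the pair $(w_k,z_k)$, the optimized average $\wa_k$ becomes $(w_k^\alpha,z_k^\alpha)$, and hence the gain $\xi_k=\nrs{(w_k^\alpha,z_k^\alpha)}_B/\nrs{(w_k,z_k)}_B$ matches \eqref{eqn:thetadef}. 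Under this dictionary the general bound \eqref{eqn:genm} becomes exactly the claimed inequality, with $C$ depending on the stated lower bound on the direction sines; no computation beyond this notational translation is needed.
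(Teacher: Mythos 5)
Your proposal is correct and follows essentially the same route as the paper: the authors likewise invoke Lemmas \ref{contract}, \ref{LFD}, and \ref{lemma2} to confirm that $G$ satisfies Assumptions \ref{assume:g} and \ref{assume:fg} in the B-norm (with $C_0 = C_G$, $C_1 = \hat C_G$, and $\sigma = 1-C_G$ under the data restriction $C_G<1$), and then obtain Theorem \ref{thmaa} by directly applying the abstract one-step bound of Theorem \ref{thm:genm} (Theorem 5.5 of \cite{PR19a}) with only the notational translation you describe.
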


\section{Numerical experiments for Anderson accelerated Picard iterations}
\label{sec:numerics}

\begin{figure}[ht!]
\begin{center}
\includegraphics[width = .4\textwidth, height=.4\textwidth,viewport=50 0 960 900, clip]{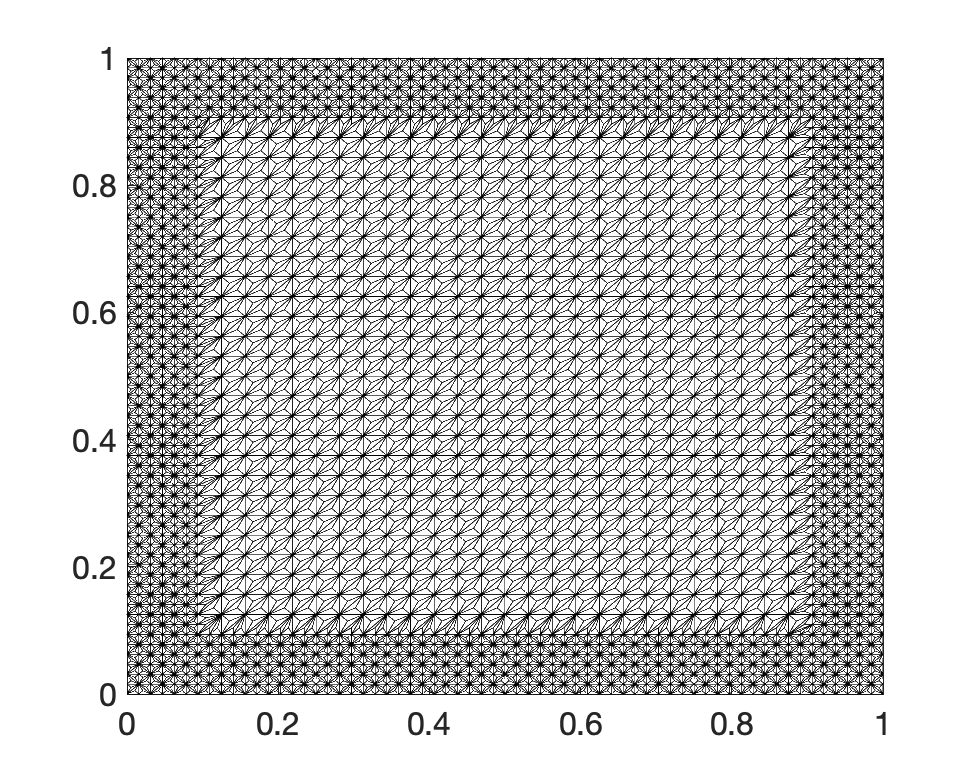}
\caption{\label{mesh}The mesh used for all numerical tests.
}\end{center}
\end{figure}

\begin{figure}[h!]
\vspace*{-20pt}
\begin{center}
\includegraphics[width = .3\textwidth, height=.3\textwidth,viewport=80 180 560 600, clip]{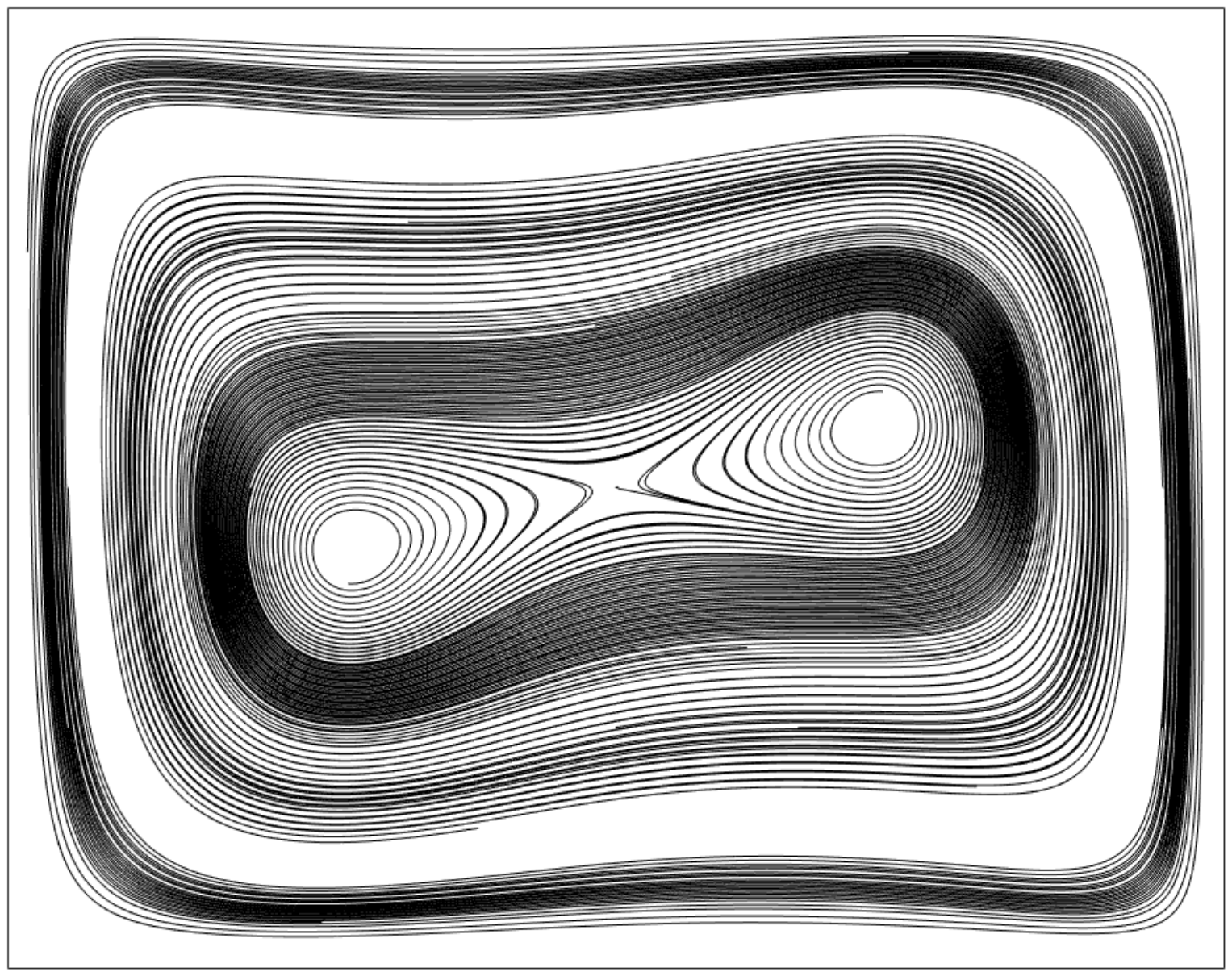}
\includegraphics[width = .3\textwidth, height=.3\textwidth,viewport=80 180 560 600, clip]{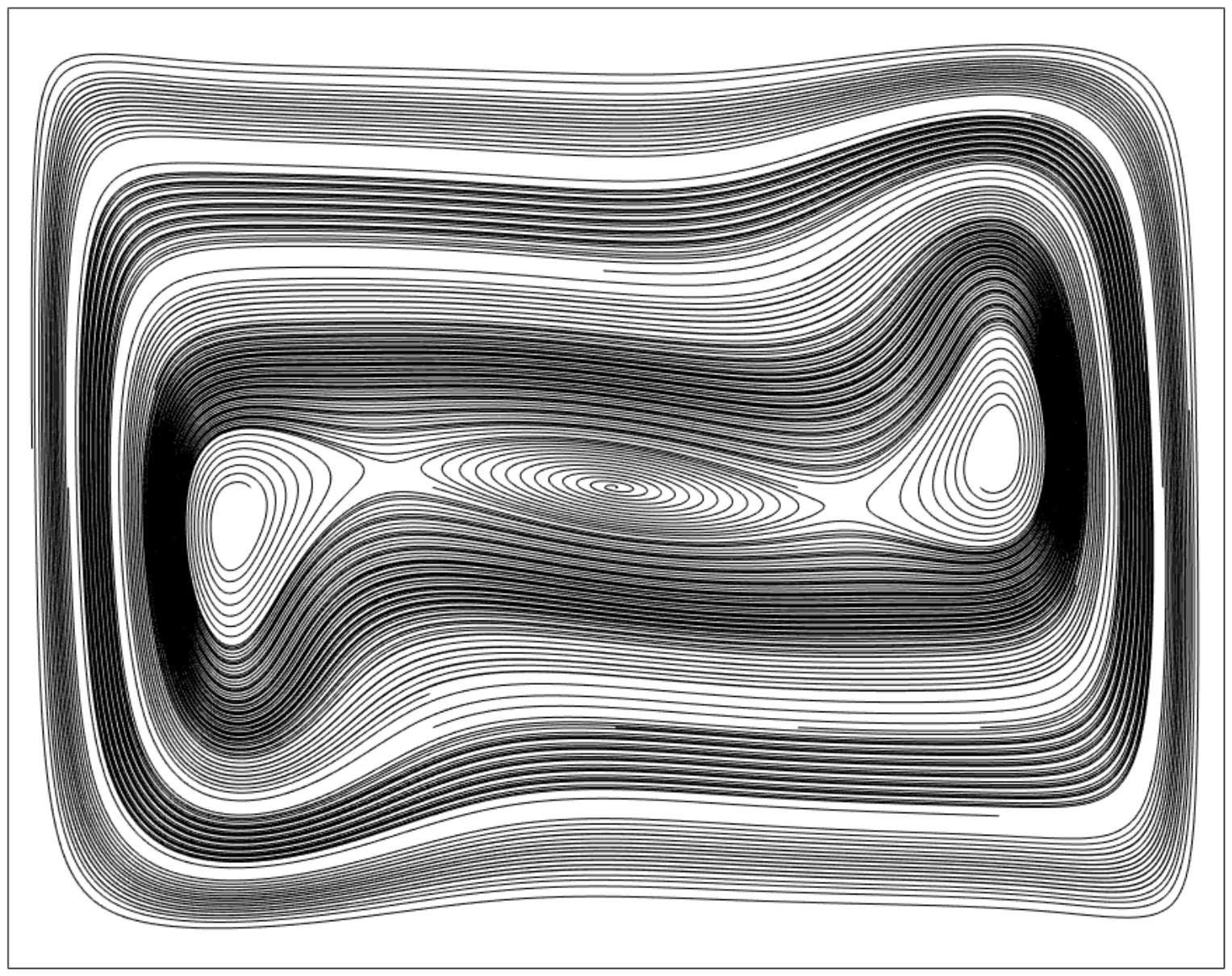}
\includegraphics[width = .3\textwidth, height=.3\textwidth,viewport=80 180 560 600, clip]{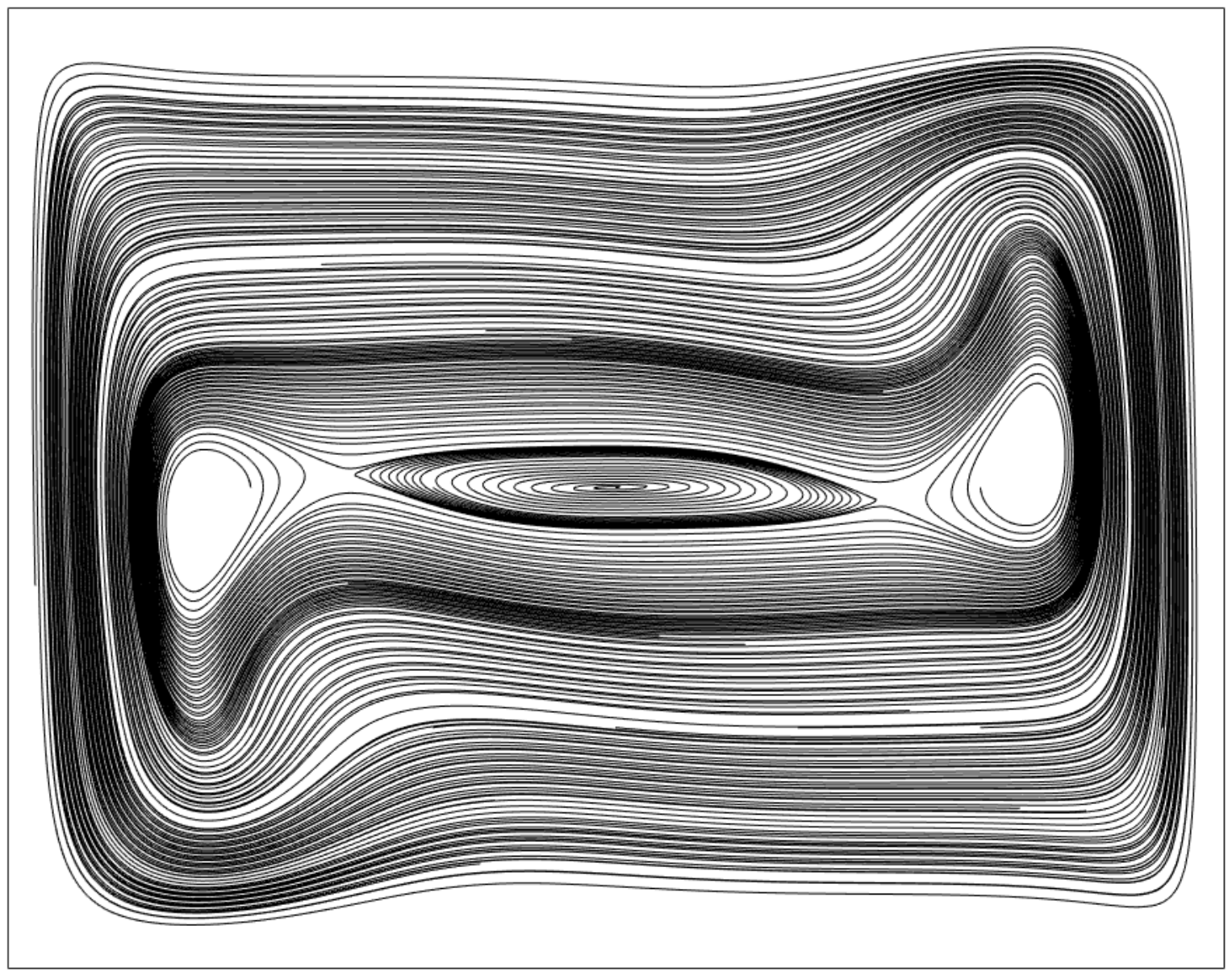}\\
\includegraphics[width = .3\textwidth, height=.3\textwidth,viewport=100 150 520 600, clip]{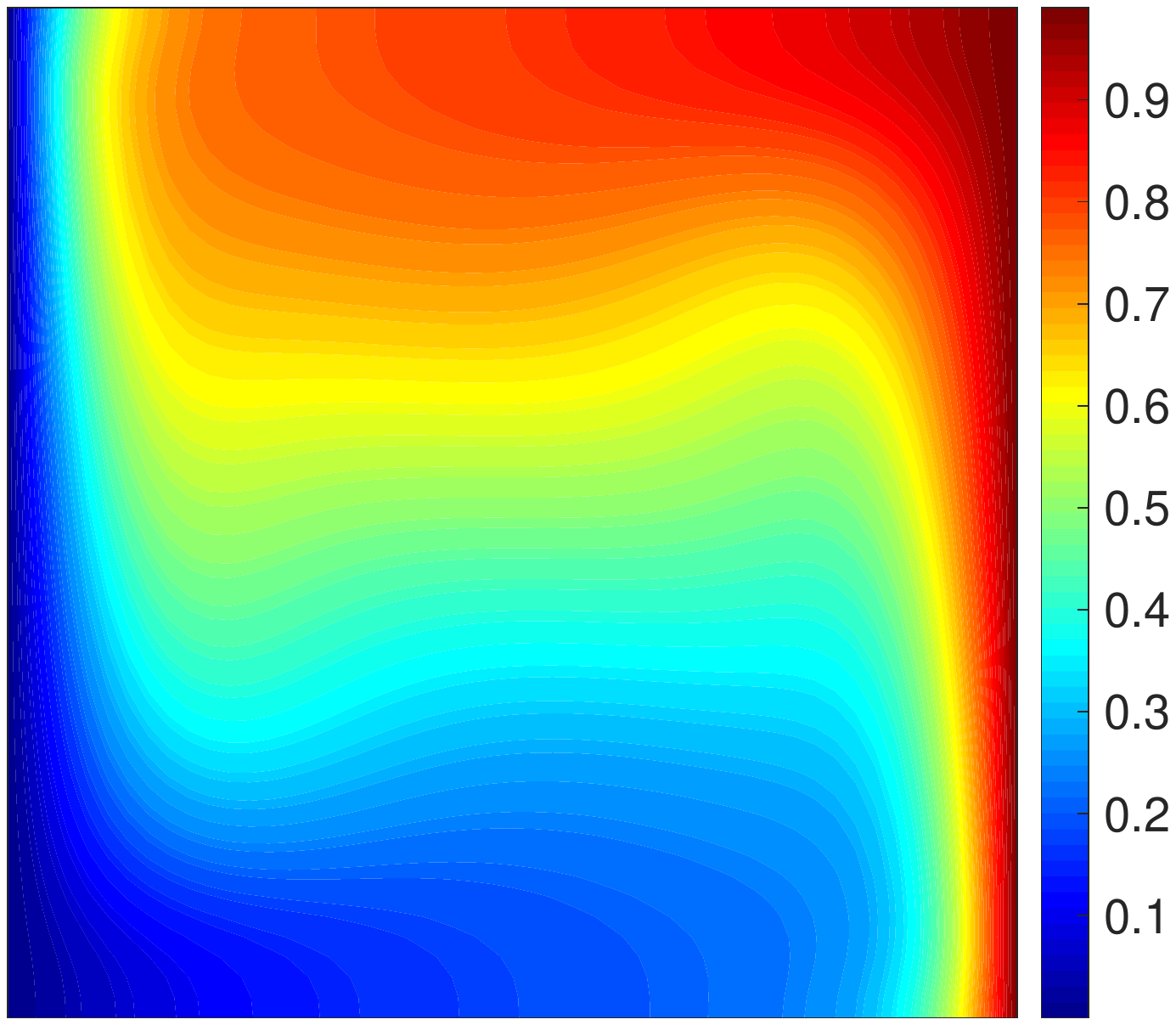}
\includegraphics[width = .3\textwidth, height=.3\textwidth,viewport=100 150 520 600, clip]{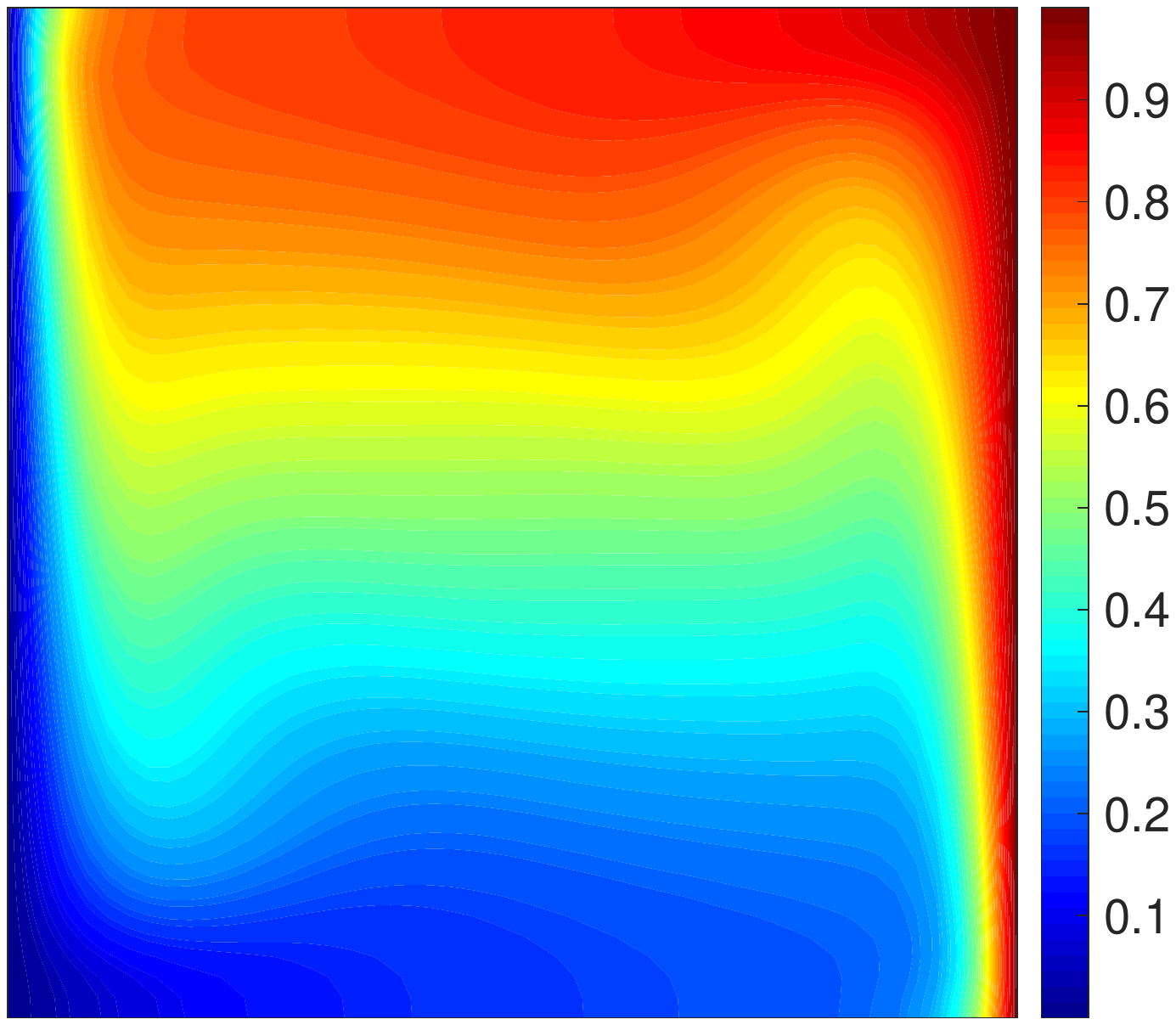}
\includegraphics[width = .3\textwidth, height=.3\textwidth,viewport=100 150 520 600, clip]{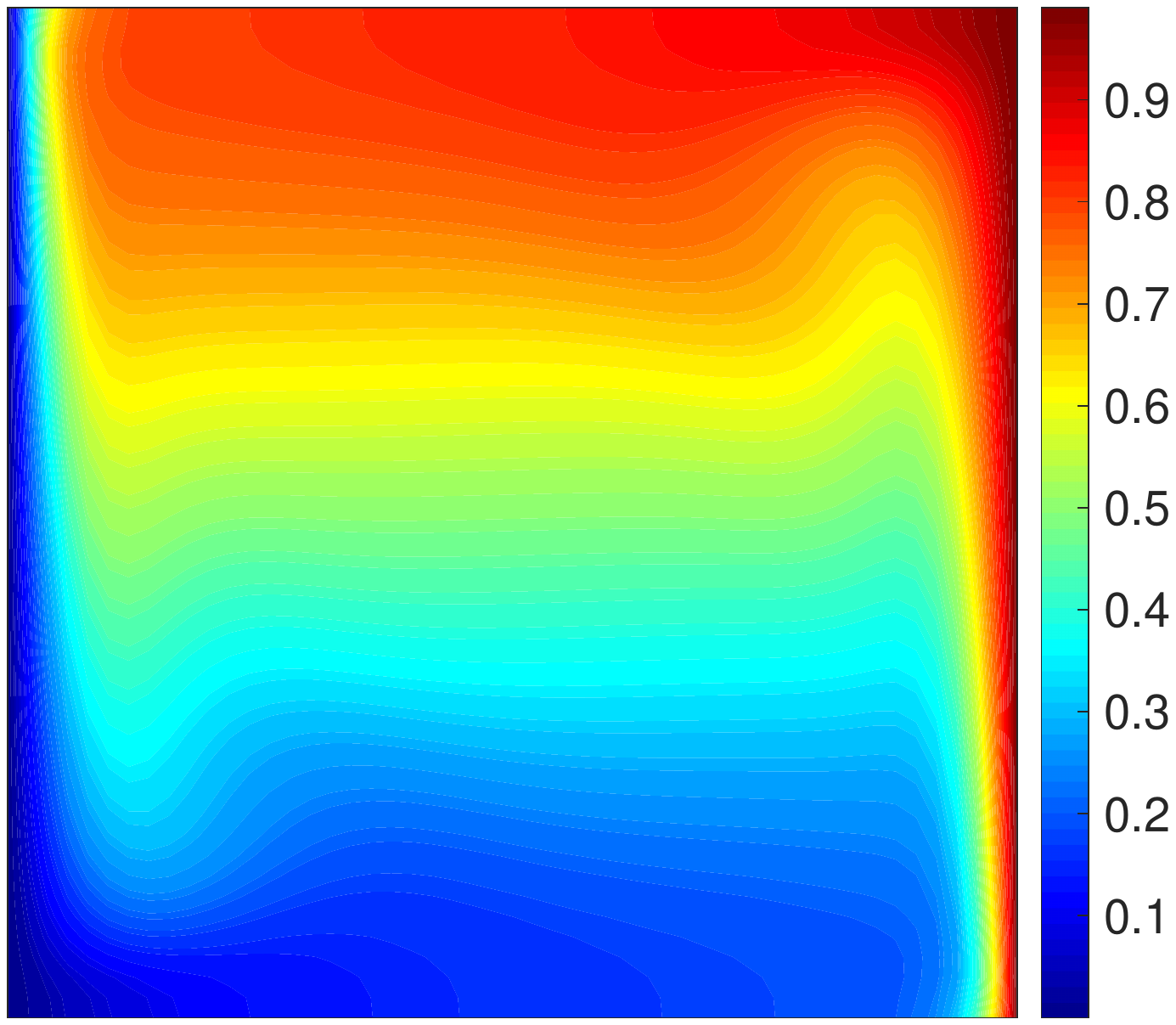}
\vspace*{-20pt}
\caption{\label{Ra5plots}The resolved solution's velocity streamlines (top) and temperature contours (bottom), for $Ra=10^5$, $Ra=5\cdot 10^5$, and $Ra=10^6$, from
left to right.}
\end{center}
\end{figure}

We now demonstrate the {Anderson accelerated Picard iteration for the Boussinesq system} on a benchmark differentially heated cavity problem from \cite{CK11}, with varying Rayleigh number.  The domain for the problem is the unit square, 
and for boundary conditions we enforce no slip velocity on all walls, 
$\nabla T\cdot n=0$ on the top and bottom, and $T(1,y)=1$, $T(0,y)=0$.  
The initial iterates $u_0=0$ and $T_0=0$, are used for all tests.
No continuation method or pseudo time stepping is employed.

The discretization uses $(P_2,P_1^{disc})$ velocity-pressure Scott-Vogelius elements, 
and $P_2$ temperature elements.
The mesh is created by starting with a uniform triangulation, 
refining along the boundary, then applying a barycenter refinement (Alfeld split, in the language of Fu, Guzman and Neilan \cite{FGN20}).  
The resulting mesh is shown in figure \ref{mesh}, and with this element choice provides 89,554 
total degrees of freedom.  We show results below for varying Rayleigh numbers, 
which come from using parameters $\nu=0.01$ and $\kappa=0.01$, and varying $Ri$.  Plots of resolved solutions for varying $Ra$ are shown in figure \ref{Ra5plots}.

In our tests, we consider both constant algorithmic depts $m$, and 
also a 2-stage strategy that uses a smaller $m$ (1 or 2) when the nonlinear residual is 
larger than $10^{-3}$ in the B-norm, and $m=20$ when the residual is smaller.  
The 2-stage approach is motivated by Theorem  \ref{thmaa}, 
which suggests that greater depths $m$ can be detrimental early in the iteration due to 
the accumulation of non-negligible higher order terms in the residual expansion.
Once the residual is sufficiently small, then the reduction of the 
first order terms from a greater depth $m$ can be enjoyed without 
noticeable pollution from the higher-order terms, 
which essentially results in an improved linear convergence rate.

\begin{figure}[ht!]
\begin{center}
$Ra=10^5$ \hspace{2in} $Ra=5\cdot 10^5$ \\
\includegraphics[width = .45\textwidth, height=.3\textwidth,viewport=20 180 550 600, clip]{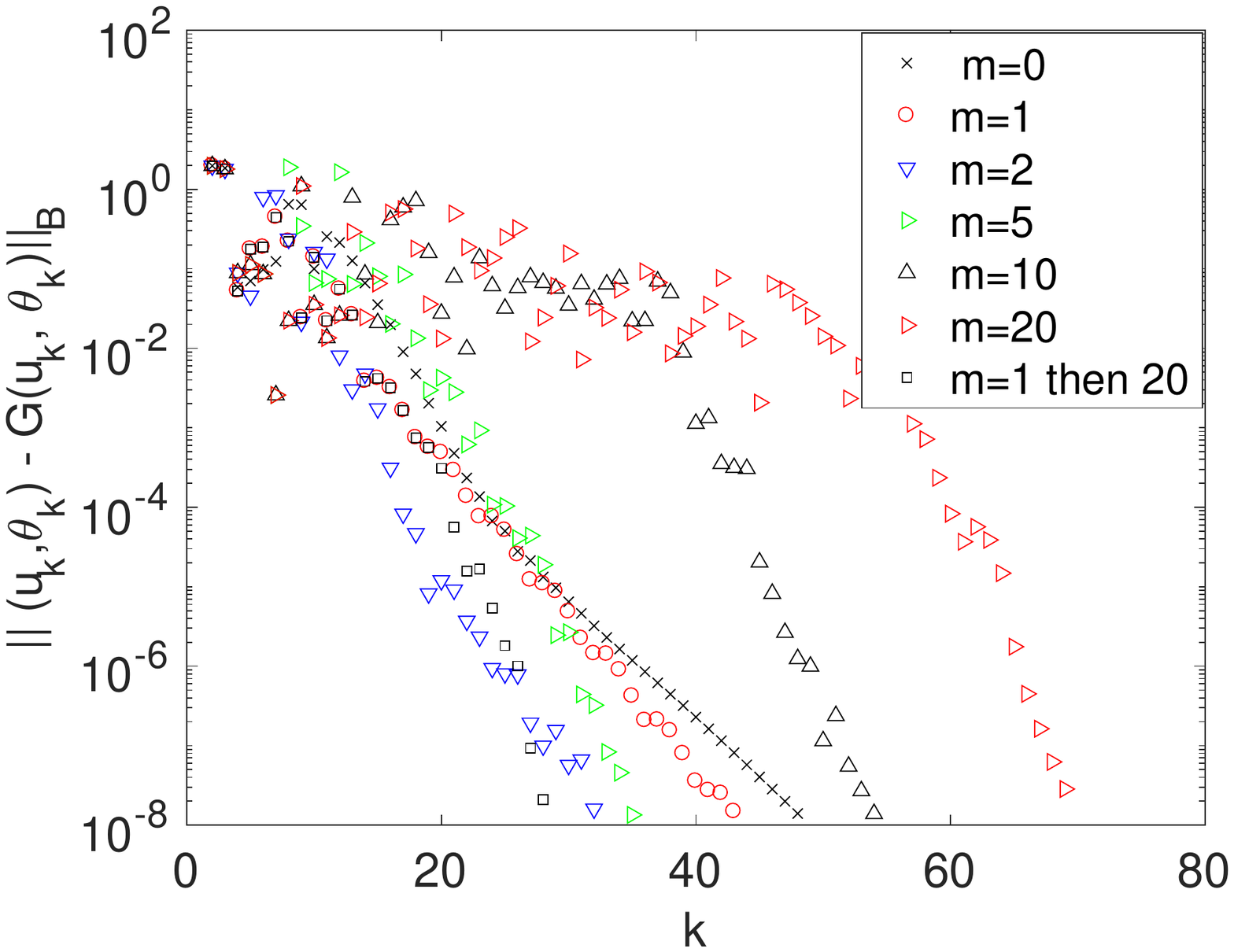}
\includegraphics[width = .45\textwidth, height=.3\textwidth,viewport=20 180 550 600, clip]{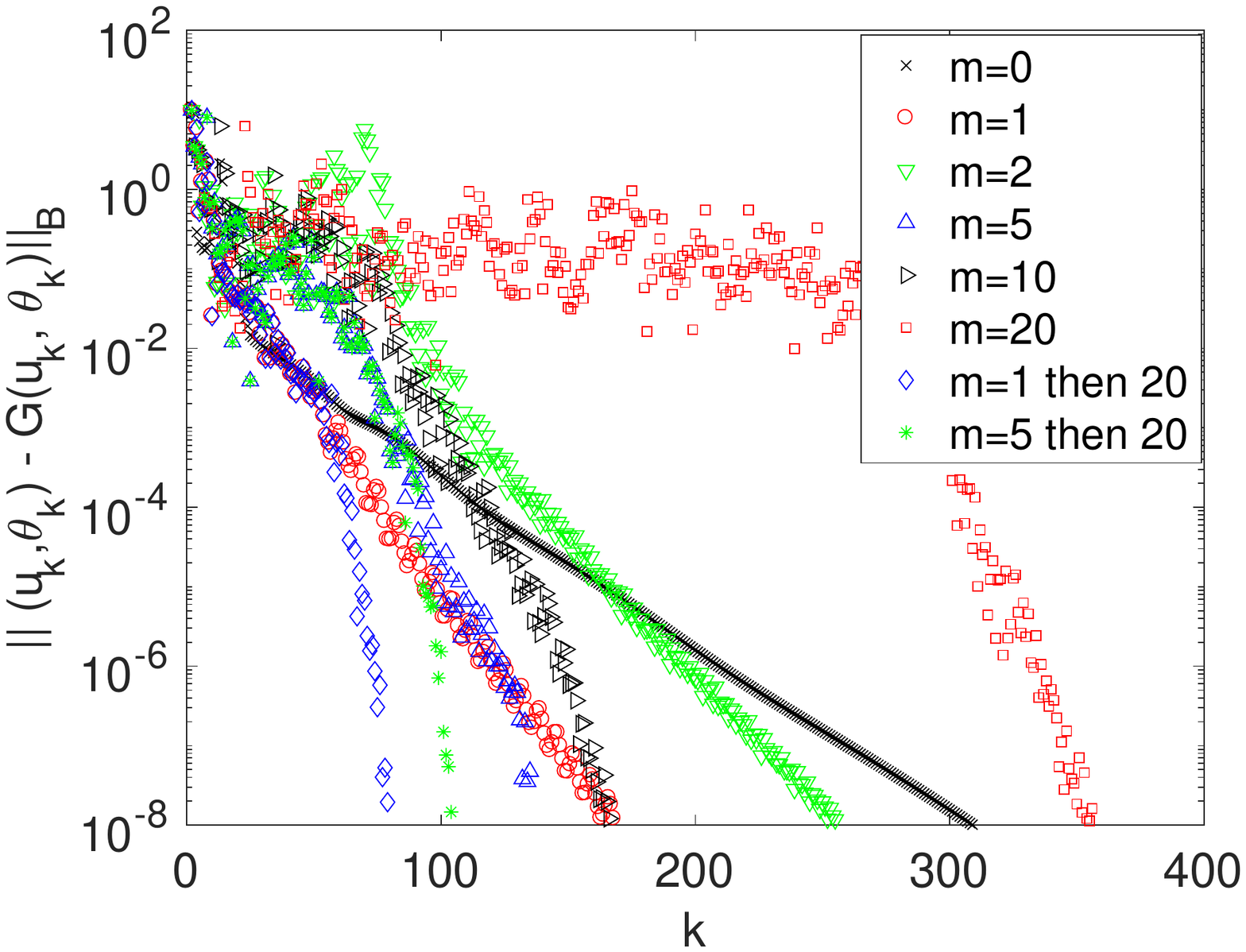} \\
$Ra=10^6$ \hspace{2in} $Ra=2\cdot 10^6$ \\
\includegraphics[width = .45\textwidth, height=.3\textwidth,viewport=20 180 550 600, clip]{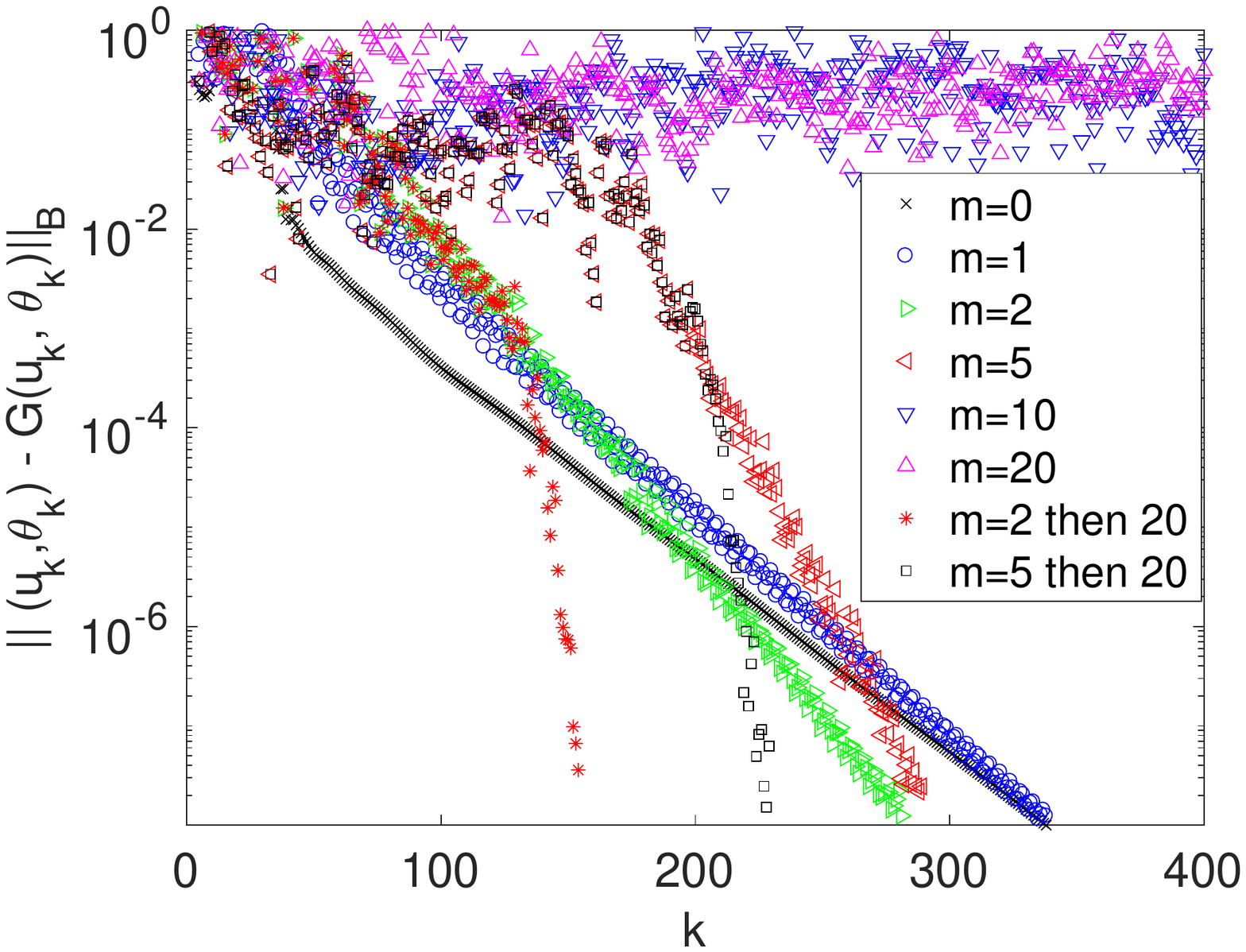}
\includegraphics[width = .45\textwidth, height=.3\textwidth,viewport=20 180 550 600, clip]{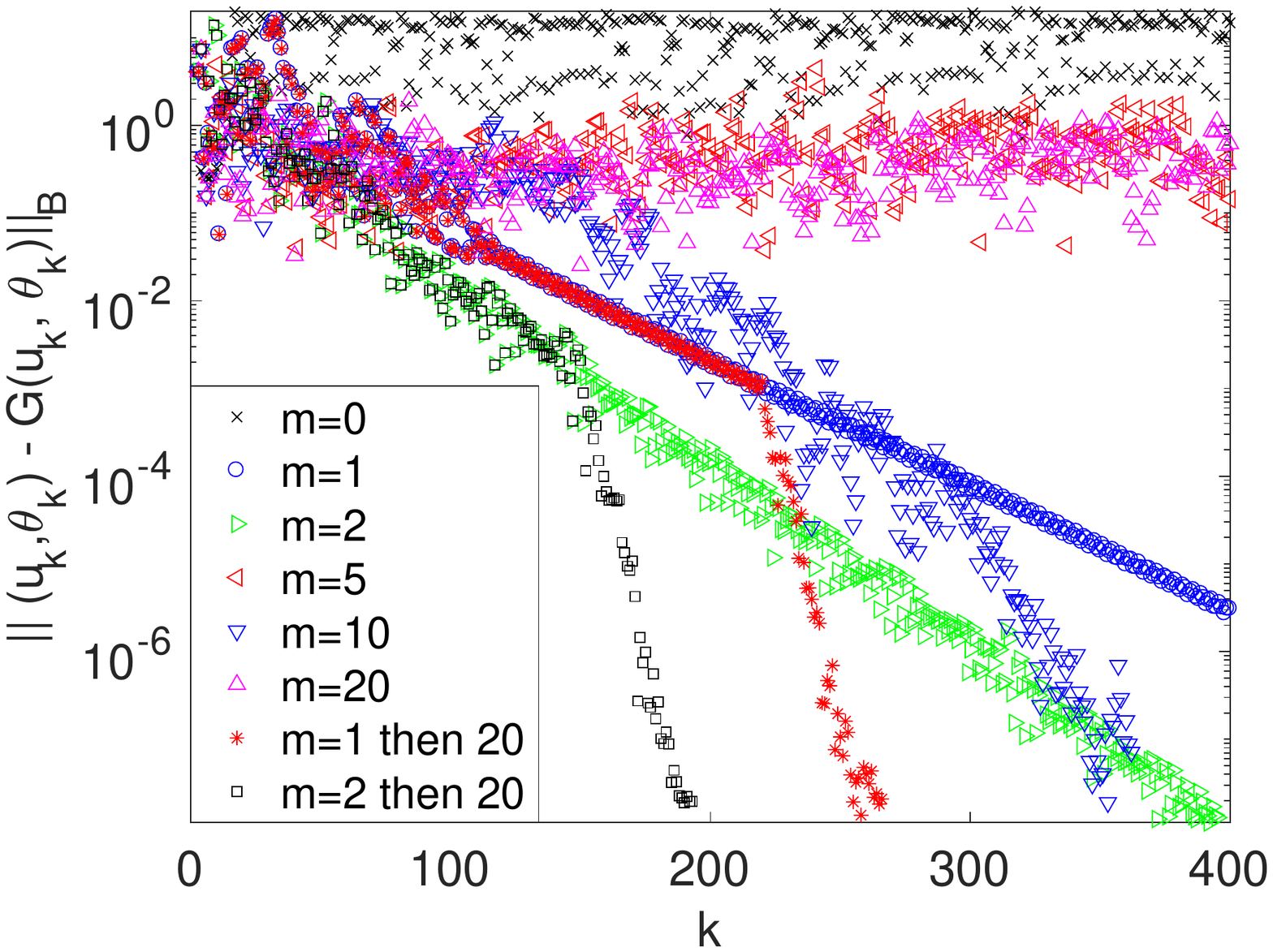}
\caption{\label{Ra5plots2}Convergence results for the differentially heated cavity with varying $m$, for several Rayleigh numbers.}
\end{center}
\end{figure}

We test here Algorithm \ref{algaa}, i.e. the Anderson accelerated Picard iteration for 
the Boussinesq system, for varying choices of depth  $m$ and 
damping parameter $\beta$, and for several
Rayleigh numbers: $Ra=10^5,\ 5\cdot 10^5,\ 10^6,\ 2\cdot 10^6$.  For each $Ra$, we tested with $m=0$ (no acceleration) and each fixed damping parameter 
$\beta=0.05, 0.1, 0.15,\ ..., 1$, and then used the best $\beta$ from 
all tests with that $Ra$. Respectively, for $Ra=10^5,\ 5\cdot 10^5,\ 10^6,\ 2\cdot 10^6$, these parameters  were $\beta=0.3,\ 0.05,\ 0.05,\ 0.05$.  Convergence results for varying $m$ are shown in figure \ref{Ra5plots2}, displayed in terms of the B-norm of the nonlinear residual versus iteration number.  We note that the usual Picard iteration, i.e. $m=0$ and $\beta=1$, did not converge for any of these $Ra$ numbers; after 500 iterations, the B-norm was still larger than $10^{-1}$.  With the appropriately chosen relaxation parameters, the Picard iteration did converge for each case except when $Ra=2\cdot 10^6$.

Convergence results for the lowest Rayleigh number, $Ra=10^5$, are shown in the top left of figure \ref{Ra5plots2} for different values of $m$ and fixed $\beta=0.3$.  Here, we observe the unaccelerated method converges rather quickly, and the accelerated methods that converged faster were $m=1$ and $m=2$ and the 2-stage method that uses $m=1$ and then switches to $m=20$,  with the 2-stage method giving the best results.    Anderson accelerated Picard with $m=5,\ 10,\ 20$ all converged, but slower than if no acceleration was used.  

Results for $Ra=5\cdot 10^5$ also show lower values of $m$ improving convergence but higher values slowing convergence.  Indeed, $m=1, 2, 5$ along with 2-stage methods that used $m=1$ then $20$ and $m=5$ then 20, all outperformed the unaccelerated iteration, while $m=10$ and $m=20$ slowed convergence.  For $Ra=10^6$, the methods with smaller constant $m=1,2,5$ converged, all in roughly the same number of iterations as the unaccelerated method, while the methods run with larger $m=10,\ 20$ did not converge within 400 iterations.  Once again, significant improvement is seen from using 2-stage choices of $m$, and this gave the best results.

With $Ra=2\cdot 10^6$, results showed more improvement from the 
acceleration, as compared to those with lower $Ra$.  
Here, the unaccelerated Picard iteration did not converge, and neither does the iteration with $m=1$. 
Again, the best results come from using a 2-stage choice of $m$, using
a lesser depth at the beginning of the iteration, and a greater depth once the
residual is sufficiently small.

The results described above show a clear advantage from using Anderson acceleration with the Picard iteration, especially for larger Rayleigh numbers.  These results are also in good agreement with our theory

which demonstrates that
Anderson acceleration decreases the first order term, but then
adds higher order terms, to the residual bound.  
Hence using greater algorithmic depths when the residual is large can 
sufficiently pollute the solution so that the improvement found in the reduction of 
the first order term is outweighed by the additional contributions from higher order 
terms.  In all cases, $m=10,\ 20$ slowed convergence compared to $m=0$, 
if the $m=0$ iteration converged. Both theory and these experiments
 also suggest that early in the iteration, moderate choices of algorithmic
depth can be advantageous.  The best results shown here come from the 
2-stage strategy, which takes advantage of the reduction in the first-order
residual term, but only once the residual is small enough that the higher-order 
contributions are negligible in comparison.

\section{Acceleration of the Newton iteration for the Boussinesq system}
\label{sec:newton}

While the theory for this paper focuses on the Picard iteration, it is also of interest to consider the Newton iteration for the Boussinesq system, which takes the form
\begin{align}
(u_{k-1}\cdot \nabla ) u_k  + (u_{k}\cdot \nabla ) u_{k-1}  - (u_{k-1}\cdot \nabla ) u_{k-1} - \nu \Delta u_k + \nabla p_k 
& = Ri \langle 0,\theta_{k} \rangle^T + f, \label{n1a} \\
\nabla \cdot u_k 
&= 0, \label{n2a} \\
(u_{k-1}\cdot \nabla) \theta_k  + (u_{k}\cdot \nabla) \theta_{k-1}  - (u_{k-1}\cdot \nabla) \theta_{k-1}  - \kappa \Delta \theta_k 
& = \gamma, \label{n3a}
\end{align}
together with appropriate boundary conditions.

The Newton iteration is often superior for solving nonlinear problems, 
particularly if one can find 
an initial guess sufficiently close to a solution.  
However, for Boussinesq systems, the Newton
iteration also comes with a significant additional difficulty in that the linear systems that need to be solved at each iteration are fully coupled.  That is, one needs to solve larger block linear systems for $(u_k,p_k,\theta_k)$ simultaneously, whereas for the Picard iteration one first solves for $\theta_k$, and then solves a Navier-Stokes type system for $(u_k,p_k)$.  Hence each iteration of Picard is significantly more efficient than each iteration of Newton.  

We proceed now to test Anderson acceleration applied to the Newton iteration, using the same differentially heated cavity problem and discretization studied above.  
While theory to describe how Anderson acceleration can improve the performance of
Newton iterations has yet to be developed, evidence for the efficacy of the method
has been described in \cite{EPRX20,PS20}.
While the acceleration can be expected to interfere with Newton's quadratic convergence
in the vicinity of a solulution, the advantage explored here is the behavior of the
algorithm outside of that regime.
In the far-field regime, outside the domain of asympotically 
quadratic convergence, the Newton iteration may converge linearly \cite{Ptak75}, or, 
of course, not at all. In comparing acccererated Newton with Newton augmented with
a linesearch, we demonstrate how the acceleration can effectively enlarge the domain
of convergence for Newton iterations. As shown in the experiments below, a damped
accelerated Newton iteration with algorithmic depth $m=10$ can also solve the Boussinesq
system with $Ra = 2 \cdot 10^6$.

In the following tests, convergence was declared if the nonlinear residual 
fell below $10^{-8}$ in the B-norm.  If residuals grew larger than $10^4$ in the B-norm, 
the iteration was terminated, and the test was declared to fail due to (essentially) 
blowup, and is denoted with a `B' in the tables below.  
If an iteration did not converge within 200 iterations but its residuals all stayed below
$10^4$ in the B-norm, we declared it to be a failure and denote it with an `F' 
in the tables below.

We first tested Anderson acceleration applied to the Newton iteration, with varying $m$ and no relaxation ($\beta=1$).   Results are shown in table \ref{newtaa}, and a clear improvement can be seen in convergence for higher $Ra$ as $m$ increases.  For comparison, we also show results of (unaccelerated) Newton with a line search, and give results from two choices of line searches: LS1 continuously cuts the Newton step size ratio in half (up to 1/64), until either finding a step that decreases the nonlinear residual of the finite element problem, or using a step size ratio of 1/64 otherwise.  
LS2 uses `fminbnd' from MATLAB (golden section search and parabolic interpolation) to use the step size ratio from [0.01,1] that minimizes the nonlinear residual of the finite element problem.  While  the line searches do help convergence of the Newton iteration, they do not perform as well as Newton-Anderson ($m=$5 or 10) for higher $Ra$ values.

\begin{table}[ht!]
\centering
\begin{tabular}{|l|c|c|c|c|c|c||c|c|}
\hline
$Ri$ &	$Ra$	&	$m=0$ 	& $m=1$	& $m=2$	& $m=5$	& $m=10$		& $m=0$ + LS1 & $m=0$ + LS2 \\ \hline
1	& 1e+4		&9	&9	&11	&14	&17	& 7 & 7	\\ \hline
10	& 1e+5		&B	&17	&19	&81	&38	& B & 11	\\ \hline
20	& 2e+5		&B	&B	&B	&34	&36	& B & 20	\\ \hline
50	& 5e+5		&B	&B	&B	&44	&56	& B & B	\\ \hline
100	& 1e+6		&B	&B	&B	&B	&B	& B & B	\\ \hline
150	& 1.5e+6	&B	&B	&B	&B	&B	& B & B	\\ \hline
200	& 2e+6	&B	&B	&B	&B	&B	& B & B	\\ \hline
\end{tabular}
\caption{ \label{newtaa} Iterations required for Anderson accelerated Newton iterations to converge, with $\beta=1$ and varying $m$.  Results of the unaccelerated Newton method with line searches are also shown. }
\end{table}

We next considered Anderson acceleration to Newton, but using relaxation of $\beta=0.3$.  Result are shown in table \ref{newtaa2}, and we observe further
improvement compared to the case of $\beta=1$.  Lastly, we considered Anderson acceleration applied to Newton, but choosing the $\beta$ from $\{0.0625, 0.125, 0.25, 0.5,1\}$ that has the smallest residual in the B-norm.  This is essentially a look-ahead step, which increases the cost of each step the accelerated Newton algorithm by a factor of 4.  
Results from choosing $\beta$ this way were significantly better than for constant 
$\beta=1$, and somewhat better than $\beta=0.3$, although not worth the extra cost except when using $\beta=0.3$ failed.  A clear conclusion for this section is that Anderson acceleration with a properly chosen depth and relaxation can significantly improve the ability for the Newton iteration to converge for larger $Ra$.  

\begin{table}[ht!]
\centering
\begin{tabular}{|l|c|c|c|c|c|c|}
\hline
$Ri$ &	$Ra$	&	$m=0$	& $m=1$	& $m=2$	& $m=5$	& $m=10$		\\ \hline
1	& 1e+4			&13	&11	&10	&11	&13		\\ \hline
10	& 1e+5		&B	&18	&18	&23	&52		\\ \hline
20	& 2e+5	&B	&41	&32	&22	&44		\\ \hline
50	& 5e+5		&B	&B	&B	&74	&134		\\ \hline
100	& 1e+6		&B	&B	&B	&95	&F		\\ \hline
150	& 1.5e+6	&B	&B	&B	&B	&141		\\ \hline
200	& 2e+6	&B	&B	&B	&B	&156		\\ \hline
\end{tabular}
\caption{ \label{newtaa2} Iterations required for Anderson acclerated Newton iterations to converge to a nonlinear residual of $10^{-8}$ in the B-norm, with $\beta=0.3$ and varying $m$.  B denotes the nonlinear residual growing above $10^4$, and $F$ denotes no convergence after 200 iterations.}
\end{table}

\begin{table}[ht!]
\centering
\begin{tabular}{|l|c|c|c|c|c|c|}
\hline
$Ri$ &	$Ra$	&	$m=0$	& $m=1$	& $m=2$	& $m=5$	& $m=10$		\\ \hline
1	& 1e+4		&8	&9	&9	&13	&24		\\ \hline
10	& 1e+5	&F	&13	&14	&20	&38		\\ \hline
20	& 2e+5	&123	&21	&27	&32	&61		\\ \hline
50	& 5e+5		&35	&59	&31	&65	&93		\\ \hline
100	&1e+6		&B	&42	&F	&68	&106		\\ \hline
150	& 1.5e+6	&B	&B	&B	&F	&112		\\ \hline
200	& 2e+6	&B	&B	&B	&F	&150		\\ \hline
\end{tabular}
\caption{ \label{newtaa3} Iterations required for Anderson acclerated Newton iterations to converge to a nonlinear residual of $10^{-8}$ in the B-norm, with $\beta$ being chosen as the best from a set of five values, and varying $m$.  B denotes the nonlinear residual growing above $10^4$, and $F$ denotes no convergence after 200 iteraitons.}
\end{table}

\section{Conclusions}\label{sec:conc}
In this paper, we studied Anderson acceleration applied to the Picard iteration for Boussinesq system.  The Picard iteration is advantageous compared to the Newton iteration for this problem because it decouples the linear systems into easier to solve pieces.  Since convergence of Picard iterations is typically slow, it is a good candidate for acceleration.  In this work we showed that the Anderson acceleration analysis framework developed in \cite{PR19a} was applicable to this Picard iteration, by considering each iteration as the application of a particular solution operator, and then proving the solution operator had the required properties laid out in \cite{PR19a}.  This in turn proved
one-step error analysis results from \cite{PR19a} hold for Anderson acceleration applied to the Boussinesq system, and local convergence of the accelerated method
under a small data condition.  
Numerical tests with the 2D differentially heated cavity problem
showed good numerical results demonstrating 
the convergence behavior was consistent with our theory, and in particular that a 2-stage choice of Anderson depth works very well. 
Anderson acceleration applied to the related Newton iteration was also considered in numerical tests and it was found that Anderson acceleration allowed for convergence at significantly higher Rayleigh numbers than the usual Newton iteration with common line search techniques.

\section{Acknowledgements}
The author SP acknowledges support from National Science Foundation through the grant DMS 1852876.

\bibliographystyle{plain}


\end{document}